\documentclass[a4paper,12pt]{amsart}

\usepackage{a4wide,hyperref,amsmath,amssymb,enumitem, stmaryrd,xcolor,comment}
\usepackage{tikz}

\usetikzlibrary{decorations.pathreplacing}
\usetikzlibrary{shapes}
\tikzset{
			inner sep=1pt,semithick,
			vertex/.style={circle,draw,fill,minimum size= 1pt},
			vertexb/.style={rectangle,draw,fill, minimum size = 3pt},
			vertexc/.style={circle,draw,fill=white,minimum size= 1pt},			
			thickedge/.style={line width=0.73pt},
			rededge/.style={line width=0.73pt, red},
			blueedge/.style={line width=0.73pt, blue},						
			font=\tiny
}

\newcommand{\EE}{\mathbb{E}}

\newcommand{\LL}{\mathbb{L}}

\newcommand{\id}{\operatorname{id}}
\newcommand{\gap}{\operatorname{gap}}
\newcommand{\mast}{\operatorname{mast}}
\newcommand{\mastcat}{\operatorname{mastcat}}
\newif\ifdetails
\detailstrue
\newcommand{\DETAIL}[1]%
{\ifdetails\par\fbox{\begin{minipage}{0.9\linewidth}\textit{Detail:}
      #1\end{minipage}}\par\fi}
\newcommand{\TODO}[1]%
{\ifdetails\par\fbox{\begin{minipage}{0.9\linewidth}\textbf{TODO:}
      #1\end{minipage}}\par\fi}

\newtheorem{lemma}{Lemma}
\newtheorem{prop}[lemma]{Proposition}
\newtheorem{theorem}[lemma]{Theorem}
\newtheorem{corollary}[lemma]{Corollary}

\newtheorem{remark}{Remark}

\newtheorem{question}{Question}

\newtheorem{claim}{Claim}
\newtheorem{observation}{Observation}

\newcommand{\old}[1]{{}}

\usepackage{lineno}

\makeatletter
\DeclareRobustCommand{\cev}[1]{%
  {\mathpalette\do@cev{#1}}%
}
\newcommand{\do@cev}[2]{%
  \vbox{\offinterlineskip
    \sbox\z@{$\m@th#1 x$}%
    \ialign{##\cr
      \hidewidth\reflectbox{$\m@th#1\vec{}\mkern4mu$}\hidewidth\cr
      \noalign{\kern-\ht\z@}
      $\m@th#1#2$\cr
    }%
  }%
}
\makeatother

\title{Coconvex characters on collections of phylogenetic trees}

\author{Eva Czabarka}
\address{University of South Carolina}

\author{Steven Kelk}
\address{Maastricht University}

\author{Vincent Moulton}
\address{University of East Anglia}

\author{L\'aszl\'o A. Sz\'ekely} 
\address{University of South Carolina}

\date{\today}

\begin{document}

\begin{abstract}
    In phylogenetics, a key problem is to construct evolutionary trees
    from collections of characters where, for a set $X$ of species, 
    a character is simply a function from $X$ onto a set of states. 
    In this context, a key concept is convexity, where a character
    is convex on a tree with leaf set $X$ if the collection of subtrees spanned
    by the leaves of the tree that have the same state are pairwise disjoint.
    Although
    collections of convex characters on a single
    tree have been extensively studied over the past few decades, very little is known
    about {\em coconvex characters}, that is, characters that are simultaneously
    convex on a collection of trees. As a starting point to better 
    understand coconvexity, in this paper 
    we prove a number of extremal results for the following question:
    {\em What is the minimal number of coconvex characters  
    on a collection of $n$-leaved trees taken over all collections of size $t \ge 2$, also  
    if we restrict to coconvex characters which map to $k$ states?}
    As an application of coconvexity, we introduce a new one-parameter 
    family of tree metrics, which range between the coarse Robinson-Foulds distance 
    and the much finer quartet distance.
    We show that bounds on the quantities in the above question 
    translate into bounds for the diameter of the tree space for the new distances.
    Our results open up several new interesting directions and questions which 
    have potential applications to, for example, tree spaces and phylogenomics.
\end{abstract}

\keywords{Phylogenetics, combinatorics, enumeration, partitions, convexity.}
\subjclass{68R05, 92B10, 05C70, 05C30}

\maketitle

\newpage 


\section{Introduction}

In phylogenetics a key problem is to infer evolutionary trees based on data
arising from contemporary species \cite{SS03,steel2016phylogeny}.
Mathematically speaking, given a collection of species $X$, the aim is to construct a {\em phylogenetic tree}, that is, 
a binary tree with leaf set $X$
whose branching patterns indicate how the species evolved over time.
Often evolutionary data comes in the form of {\em characters}, that is,
maps from $X$ into some set of states. 
For example, if $X$ is a collection of animals a character could 
be the map that assigns each animal in $X$ to the state of either having wings or no wings.
Each character induces a partition on $X$ where two elements in $X$ 
are in the same class if and only if the character maps them to the same state (see e.g. Fig.~\ref{fig:coconvex}).
In this paper we shall consider two characters to be equivalent if they induce the
same partition and thus, in particular, we consider characters as being 
partitions of $X$ and the states of the character as being the classes 
in the  associated partition.

In the context of constructing phylogenetic trees from characters a key concept is convexity.
Formally, a character is {\em convex on a phylogenetic tree $T$} if the subtrees of $T$
spanned by the leaves in each of its partition classes are pairwise disjoint in $T$ 
(see e.g. Fig.~\ref{fig:coconvex}). Convexity is motivated 
by the concept of {\em homoplasy-free} evolution in which a biological state (e.g. wings versus no wings)
evolve only once (cf. \cite[p.67]{SS03}). Characters that contain at most one 
class of size at least 2 are {\it trivial}, as they are convex on every phylogenetic tree.
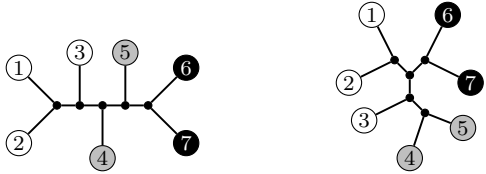
\begin{figure}[h]
\centering
\begin{tikzpicture}
\node[vertex,fill=white] (l1) at (-.5,.5) {$1$};
\node[vertex,fill=white] (l2) at (-.5,-.5) {$2$};
\node[vertex] (i1) at (0,0) {};
\node[vertex] (i2) at (.3,0) {};
\node[vertex, fill=white] (l3) at (.3,.7) {$3$};
\node[vertex] (i3) at (.6,0) {};
\node[vertex, fill=gray!50] (l4) at (.6,-.7) {$4$};
\node[vertex] (i4) at (.9,0) {};
\node[vertex, fill=gray!50] (l5) at (.9,.7) {$5$};
\node[vertex,fill=black,text=white] (l6) at (1.7,.5) {$6$};
\node[vertex,fill=black,text=white] (l7) at (1.7,-.5) {$7$};
\node[vertex] (i5) at (1.2,0) {};
\draw[thickedge] (l1)--(i1)--(l2);
\draw[thickedge] (i1)--(i2)--(l3);
\draw[thickedge] (i2)--(i3)--(l4);
\draw[thickedge] (i3)--(i4)--(l5);
\draw[thickedge] (i4)--(i5)--(l6);
\draw[thickedge] (i5)--(l7);
\end{tikzpicture}
\qquad\qquad
\begin{tikzpicture}
\node[vertex,fill=white] (l1) at (-.3,.6) {$1$};
\node[vertex,fill=white] (l2) at (-.6,-.3) {$2$};
\node[vertex] (i1) at (0,0) {};
\node[vertex] (i2) at (.2,-.2) {};
\node[vertex, fill=white] (l3) at (-.4,-.8) {$3$};
\node[vertex] (i3) at (.4,0) {};
\node[vertex, fill=gray!50] (l4) at (.2,-1.3) {$4$};
\node[vertex] (i4) at (.2,-.5) {};
\node[vertex, fill=gray!50] (l5) at (.9,-.9) {$5$};
\node[vertex,fill=black,text=white] (l6) at (.7,.6) {$6$};
\node[vertex,fill=black,text=white] (l7) at (1,-.3) {$7$};
\node[vertex] (i5) at (.4,-.7) {};
\draw[thickedge] (l1)--(i1)--(l2);
\draw[thickedge] (i1)--(i2)--(i3);
\draw[thickedge] (l6)--(i3)--(l7);
\draw[thickedge] (i2)--(i4)--(l3);
\draw[thickedge] (i4)--(i5)--(l5);
\draw[thickedge] (i5)--(l4);
\end{tikzpicture}
\caption{Two phylogenetic trees with leaf set $\{1,\dots,7\}$. 
The character $f:X \to \{A,G,T\}$
defined by $f(1)=f(2)=f(3)=A$, $f(4)=f(5)=G$, and $f(6)=f(7)=T$ 
induces the partition $\{\{1,2,3\},\{4,5\},\{6,7\}\}$. On the picture, white, gray and black colors on the leaves correspond to $A$, $G$, and $T$ respectively.
 In particular, $f$ is convex on
both  of the trees, and thus it is coconvex on the trees. In contrast, 
any character $g$ on $X$ which induces the partition 
$\{\{1,2,3,4\},\{5,6,7\}\}$ is not coconvex on these two trees, since $g$ is convex on left tree 
but not on the right one. }\label{fig:coconvex}
\end{figure}

In the past few decades there has been a great deal of work on understanding 
collections of convex characters on a single phylogenetic tree 
(see e.g. \cite[Chapter 4]{SS03},
\cite[Chapter 5]{steel2016phylogeny}).
Even so, to our best knowledge, very little is known about the behavior of collections of
characters that are simultaneously convex on a set of phylogenetic trees.
We call such a character {\em coconvex} (see e.g. Fig.~\ref{fig:coconvex}).   

The concept of coconvexity has recently 
surfaced in the algorithmic phylogenetics 
literature \cite{kelk2022sharp,kelk2024sidma,convex}. 
In particular, it has been observed that \emph{agreement forests}, combinatorial 
objects used to summarize the dissimilarity of a set of phylogenetic trees,  
project down onto characters that are coconvex. 
Counting (and enumerating) coconvex characters can thus, for example, provide an 
algorithmic bridge into counting and enumerating more complex phylogenetic structures.
Coconvexity could also potentially useful in present day phylogenomic studies 
which often involve processing collections of gene trees in order 
to construct phylogenetic trees (see e.g. \cite{Lozano-Fernandez2022}).

As a starting point to better understand coconvexity, in this paper
we prove a number of extremal results for the following question.

\begin{question} \label{question}
What is $s_n^{(t)}$,
the minimum number of coconvex characters 
on a collection of $n$-leaved trees taken over all collections of size $t \ge 2$? 
What is $s_{n,k}^{(t)}$,
the minimum number of coconvex characters with $k$ classes
on a collection of $n$-leaved trees taken over all collections of size $t \ge 2$? 
In particular, what is the smallest $k$, for which $s_{n,k}^{(t)}$ is more than the number of trivial characters?
\end{question}

Note that any trivial  character on $X$ 
is coconvex on any collection of $n$-leaved trees. It follows that 
$s_n^{(t)} \ge 2^n-n$ (see also Lemma~\ref{lm:monotone} below).
Moreover, it is known that there are at most $F_{2n-1}=O(2.619^n)$ convex characters on any 
$n$-leaved tree \cite{convex,SteelFibonacci}, where $F_m$ denotes the 
$m^{th}$ Fibonnaci number, and so this is also an upper bound for $s_n^{(t)}$.

We now briefly summarise the content of the rest of this paper.
Most of our results will focus on the special case of Question~\ref{question} where
all trees in the collection are {\em caterpillars}, that is, trees for which 
the removal of all leaves from the tree results in a path (see e.g., the tree in the left of Fig.~\ref{fig:coconvex}). In this
situation, we denote the quantities analogous to $s_n^{(t)}$ and $s_{n,k}^{(t)}$
by $c_n^{(t)}$ and $c_{n,k}^{(t)}$, respectively. 

After presenting some preliminaries, we then concentrate 
on the case $t=2$. In Section~\ref{sec:inequalities}, we begin by 
presenting some basic relationships between
the quantities $s_n^{(2)}$, $s_{n,k}^{(2)}$, $c_n^{(2)}$ and $c_{n,k}^{(2)}$.
Then, in Sections~\ref{sec:lower-bounds} to \ref{sec:upper}, we focus on finding bounds for 
pairs of caterpillar trees. More specifically, 
in Theorem~\ref{symmet} and Corollary~\ref{lowerbound_c_n}
we give lower bounds for $c_{n,k}^{(2)}$ and $c_n^{(2)}$, respectively,
as well as precise values for $c_{n,k}^{(2)}$, which come from trivial characters,  when $k$ is at most $\lceil \frac{n}{3}\rceil$ in Theorem~\ref{megegyezik}. Theorem~\ref{fo}
determines up to constant multiplicative factor the expected number of non-trivial coconvex characters
of two randomly and uniformly selected caterpillar trees.
This yields in turn  an upper bound for $c_n^{(2)}$. 

In Section~\ref{sec:more}, we consider more than two trees, and use
some results from the theory of maximum agreement subtrees to give a lower bound
for $s_{n,k}^{(t)}$ and $c_{n,k}^{(t)}$.  We also provide some precise values for $c_{n,k}^{(t)}$, when it boils down to the trivial characters with exactly $k$ classes.

In Section~\ref{sec:metrics} we present an application of coconvexity.
Since many phylogenetic reconstruction methods optimize an objective 
function over the collection of phylogenetic trees on the same leaf set,
the structure of such tree spaces have been studied extensively, 
see e.g., \cite{St.Johns,Steel1993}. 
In particular, metrics for comparing phylogenetic
trees with $n$ leaves, or {\em tree metrics}, are important in applications
such as the reconstruction of evolutionary trees where they 
help guide algorithmic searches for optimal trees \cite{Steel1993}. 
Since different metrics can reveal alternative insights 
to the structure of tree spaces \cite{St.Johns}, it is topical 
in the phylogenetic literature to look for new metrics;  see
e.g. \cite{bogdanowicz,briand,cardona,chauve2025vector}. 

In Section~\ref{sec:metrics} we
introduce a new family of metrics $d_k$, for $2 \le k \le n-2$,
on the set of $n$-leaved phylogenetic trees.
Our new distances $d_k$ provide a 1-parameter family of metrics
that interpolate between the extremes of the well-known Robinson-Foulds \cite{Robinson1981} and 
quartet distances \cite{bandeltdress,Steel1993}, which are identical to our $d_2$ and $d_{n-2}$, respectively.  
As we shall see,  bounds on the
quantity $s_{n,k}$ translate into  bounds for the diameter of the 
metric $d_k$. While the diameter of the Robinson-Foulds metric is 
easily seen to be $2n-6$ \cite{Steel1993}, the diameter of the quartet distance is a
 40 year-old open problem. A conjecture of   Bandelt and Dress \cite[p.338]{bandeltdress}  asserts that the diameter is at most
$(2/3+o(1))\binom{n}{4}$, achieved by caterpillar trees.  
Noga Alon, Humberto Naves, and Benny Sudakov \cite{alon} 
proved that the diameter is at most
$(0.69+o(1))\binom{n}{4}$, and verified that the conjecture is true
for the {\em diameter of caterpillar trees}, instead of all phylogenetic trees. 


We conclude the paper in Section~\ref{sec:discuss} by stating some open problems.

\section{Preliminaries}\label{sec:preliminaries}

In what follows we shall use the language of partitions rather than characters as
this is more convenient for our arguments.
Let $\mathfrak{P}_n$ denote the set of partitions of the set $[n]=\{1,2,...,n\}$ into non-empty classes. 
In the future, partition classes are
non-empty by definition. Let $\mathfrak{Q}_{n,t}$ denote the set of partitions of the set $[n]$ with exactly $t$ 
classes of size at least two (note $t\ge 0$), and
$\mathfrak{Q}_{n}=\bigcup_{t\ge 2}\mathfrak{Q}_{n,t}=\mathfrak{P}_n\setminus(\mathfrak{Q}_{n,0}\cup\mathfrak{Q}_{n,1})$ be 
the set of partitions with at least two classes of size at least two. 

For $\mathcal{P}\in \mathfrak{P}_n$,
we let $| \mathcal{P}|$ denote the number of classes in $\mathcal{P}$. Among the partitions of the set $[n]$
we denote by $\mathfrak{P}_{n,k}$ the set of those partitions with exactly 
$k$ classes, and by $\mathfrak{P}_n^{\ell}$ the set of those partitions 
with exactly $\ell$ singleton classes.  In addition, let $S(\mathcal{P})$ denote the set
$\{c:\{c\}\in\mathcal{P}\}$, so that 
$\mathfrak{P}_n^{\ell}=\{\mathcal{P}\in\mathfrak{P}_n:\left\vert S(\mathcal{P})\right\vert=\ell\}$.

A {\it semilabeled  binary tree} is tree that has only vertices of degree 3 and 1.
Degree one vertices are called {\it leaves}. A semilabeled  binary tree of {\it size $n$} 
has $n$ leaves, which are labeled bijectively 
with the elements of $[n]$. The remaining $n-2$ vertices are called {\it internal} vertices of the tree. 
We denote by $\LL(T)$ the set of leaves of $T$ (and we identify these leaves by their labels), 
as sometimes we use a label set different from $[n]$. 

{\it Caterpillars}  of  size $n$ are semilabeled  binary trees of  size $n$ such that the removal of
the $n$ leaves (and their incident edges) results in a path of length $n-3$ (here we assume that $n\ge 3$; 
for $n\in[2]$, we consider the unique $n$-leaf semilabeled tree as a caterpillar.) 
The {\it backbone} of the caterpillar, is the path obtained by removing all of its leaves.
If $\pi$ is a permutation on $[n]$, $T_{\pi}$ denotes the $n$-leaf caterpillar, where we draw the 
backbone as a straight line, the leaves appear on a straight line below the backbone, 
and the leaves are labeled in order as $\pi(1),\pi(2),\ldots,\pi(n)$.
Note that interchanging the first two or last two elements of a permutation results 
in the same labeled caterpillar, and so does completely reversing the order of entries. 
Consequently, there is only one labeled caterpillar for $n\le 3$ and there 
are $\frac{n!}{8}$  caterpillars for $n\ge 4$.
For the identity permutation $\id_n$ on $[n]$, we refer to $T_{\id_n}$ as the standard caterpillar. 
See Fig.~\ref{fig:standard} for some examples of these definitions.

\begin{figure}[http]
\center
\begin{tikzpicture}[scale=.5]
\node[vertexc] (a1) at (1,0) {$1$};
\node[vertexc] (a2) at (2,0) {$2$};
\node[vertexc] (a3) at (3,0) {$3$};
\node[vertexc] (a4) at (4,0) {$4$};
\node[vertexc] (a5) at (5,0) {$5$};
\node[vertexc] (a6) at (6,0) {$6$};
\node[vertexc] (a7) at (7,0) {$7$};
\node[vertex] (b2) at (2,1) {};
\node[vertex] (b3) at (3,1) {};
\node[vertex] (b4) at (4,1) {};
\node[vertex] (b5) at (5,1) {};
\node[vertex] (b6) at (6,1) {};
\draw[thick] (a1)--(b2)--(a2);
\draw[thick] (b2)--(b3)--(a3);
\draw[thick] (b3)--(b4)--(a4);
\draw[thick] (b4)--(b5)--(a5);
\draw[thick] (b5)--(b6)--(a6);
\draw[thick] (b6)--(a7);
\node at (4,-1) {$T_{\id_7}$};
\end{tikzpicture}
\qquad
\begin{tikzpicture}[scale=.5]
\node[vertexc] (a1) at (1,0) {$2$};
\node[vertexc] (a2) at (2,0) {$7$};
\node[vertexc] (a3) at (3,0) {$1$};
\node[vertexc] (a4) at (4,0) {$4$};
\node[vertexc] (a5) at (5,0) {$6$};
\node[vertexc] (a6) at (6,0) {$3$};
\node[vertexc] (a7) at (7,0) {$5$};
\node[vertex] (b2) at (2,1) {};
\node[vertex] (b3) at (3,1) {};
\node[vertex] (b4) at (4,1) {};
\node[vertex] (b5) at (5,1) {};
\node[vertex] (b6) at (6,1) {};
\draw[thick] (a1)--(b2)--(a2);
\draw[thick] (b2)--(b3)--(a3);
\draw[thick] (b3)--(b4)--(a4);
\draw[thick] (b4)--(b5)--(a5);
\draw[thick] (b5)--(b6)--(a6);
\draw[thick] (b6)--(a7);
\node at (4,-1) {$T_{(2,7,1,4,6,3,5)}$};
\end{tikzpicture}
\qquad
\begin{tikzpicture}[scale=.5]
\node[vertexc] (a1) at (1,0) {$7$};
\node[vertexc] (a2) at (2,0) {$2$};
\node[vertexc] (a3) at (3,0) {$1$};
\node[vertexc] (a4) at (4,0) {$4$};
\node[vertexc] (a5) at (5,0) {$6$};
\node[vertexc] (a6) at (6,0) {$5$};
\node[vertexc] (a7) at (7,0) {$3$};
\node[vertex] (b2) at (2,1) {};
\node[vertex] (b3) at (3,1) {};
\node[vertex] (b4) at (4,1) {};
\node[vertex] (b5) at (5,1) {};
\node[vertex] (b6) at (6,1) {};
\draw[thick] (a1)--(b2)--(a2);
\draw[thick] (b2)--(b3)--(a3);
\draw[thick] (b3)--(b4)--(a4);
\draw[thick] (b4)--(b5)--(a5);
\draw[thick] (b5)--(b6)--(a6);
\draw[thick] (b6)--(a7);
\node at (4,-1) {$T_{(7,2,1,4,6,5,3)}$};
\end{tikzpicture}
\caption{Some size $7$  caterpillars.}
\label{fig:standard}
\end{figure}
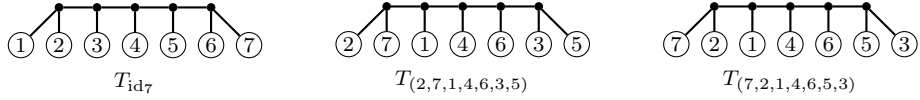

In a semilabeled  binary tree, the removal of $k-1\geq 0$ edges partitions the tree into $k$ connected components. Some of those,
however, may not contain leaves. So the removal of $k-1$ edges induces a partition of  $[n]$ into {\it at most} $k$ classes
by the relation ``being in the same connected component". Given a semilabeled  binary tree $T$, we denote by $\mathfrak{P}(T)$
the set of partitions of $[n]$ induced by the removal of some edges from the tree.
Note that, in the language used in the introduction, $\mathfrak{P}(T)$ 
corresponds to the set of convex characters on $T$;
and if $P \in \mathfrak{P}_n\setminus  \mathfrak{Q}_n$, then $P$ is a trivial character, and 
for every size $n$ semilabeled binary tree  $T$, $P \in \mathfrak{P}(T)$.

If $T_{\pi}$ is a caterpillar with $n$ leaves, 
and $\mathcal{P}\in\mathfrak{P}(T)\cap\mathfrak{Q}_{n,s}\cap\mathfrak{P}_{n,k}$ for some $0\le s\le k$, then $\mathcal{P}=\{P_1,\ldots,P_k\}$ is a \emph{standard listing of $\mathcal{P}$} if ($|P_i|\ge 2$ iff $1\le i\le s$). The listing
$\{P_1,\ldots,P_k\}$ is a \emph{standard listing of $\mathcal{P}$ with respect to $\pi$} if it is a standard listing and
for $1\le i <j\le s$ we have $\min\bigl(\pi^{-1}(P_i)\bigl)<\min\bigl(\pi^{-1}(P_j)\bigl)$. In particular, when $s=0$, then any listing of $\mathcal{P}$ is standard for any $\pi$; 
and for $s=1$,
a listing is standard for a permutation $\pi$ precisely when $P_1$ is the unique class of size at least $2$. 
Note that for any $t\in[n]$ we have that $t$ is in the $\pi^{-1}(t)$-th position
in $(\pi(1),\pi(2),\ldots,\pi(n)$.  We consider now  $s\ge 2$. For simplicity, we will assume $n\ge 4$, i.e., that
 $T_{\pi}$ has at least one backbone edge.
It is easy to see that
 if $E$ is an edge set of $T_{\pi}$ inducing $\mathcal{P}$, then $E$ contains none of the backbone edges
on the path from $\pi\left(\min(\pi^{-1}(P_i))\right)$ to $\pi\left(\max(\pi^{-1}(P_i))\right)$ in $T_{\pi}$, and for $i\ne s-1$ $E$ must contain at least one backbone edge
from the path from $\pi\left(\max(\pi^{-1}(P_i))\right)$ to $\pi\left(\min(\pi^{-1}(P_{i+1})\right)$. 
In fact, any edge set $E'$ of $T_{\pi}$ that contains precisely one backbone edge
from $\pi\left(\max(\pi^{-1}(P_i))\right)$ to $\pi\left(\min(\pi^{-1}(P_{i+1})\right)$ for each $1\le i\le s-1$ and contains all the leaf-edges leading to elements of $S(\mathcal{P})$ will generate
$\mathcal{P}$ in $T_{\pi}$.

The structure of $\mathfrak{P}(T)$ has been investigated in the literature.
Steel  \cite{jclass} showed that  
 $|\mathfrak{P}(T)\cap \mathfrak{P}_{n,k}|= \binom{2n-k-1}{k-1}$ and $\vert\mathfrak{P}(T)\vert$ is the Fibonacci number $F_{2n-1}$ for {\it every} semilabeled binary tree on $n$ leaves.
 Kelk and   Stamoulis \cite{convex} showed that 
$|\mathfrak{P}(T)\cap \mathfrak{P}_{n,k}\cap \mathfrak{Q}_n|= \binom{n-k-1}{k-1}$.

These results immediately imply that there is a $\frac{1}{2}<\alpha<1$ and $\epsilon>0$ 
such that for  $n\rightarrow\infty$ we have
\begin{equation} \label{uppertail}
\left\vert \mathfrak{P}(T)\cap \left( \cup_{\ell\geq \alpha n} \mathfrak{P}_n^\ell\right)\right\vert \leq \sum_{\ell\geq \alpha n}\binom{n}{\ell}\sum_k
\binom{n-\ell-k-1}{k-1}=O(2^{(1-\epsilon)n}),
\end{equation}
as all the  binomial coefficients are small when $\alpha$ is selected close to 1. We will need this estimate
in the proof of Theorem~\ref{fo}.

\section{Two trees}\label{sec:inequalities}

We begin by focusing on Question~\ref{question} for two trees. (In this special case we drop $t$ from the notation.) For $1\leq k\leq n$, define
\begin{eqnarray*}
s_{n,k}&=& \min |\mathfrak{P}(T)\cap \mathfrak{P}(F)\cap \mathfrak{P}_{n,k}| \text{ \ for semilabeled binary trees $T,F$ of size $n$,}\\
c_{n,k}&=& \min |\mathfrak{P}(T)\cap \mathfrak{P}(F)\cap \mathfrak{P}_{n,k}| \text{ \ for caterpillars $T,F$ of size $n$,}\\
s_{n}&=& \min |\mathfrak{P}(T)\cap \mathfrak{P}(F)| \text{ \ for semilabeled binary trees $T,F$ of size $n$,}\\
c_{n}&=& \min |\mathfrak{P}(T)\cap \mathfrak{P}(F)| \text{ \ for caterpillars $T,F$ of size $n$.}
\end{eqnarray*}

First, note that by definition
\begin{equation} \label{treeversuscat}
s_{n,k} \leq c_{n,k}  \text{\ and \ } s_n\leq c_n.
\end{equation}
We also have 
\begin{equation} \label{sum_k}
\sum_k s_{n,k} \leq  s_n   \text{\ and \ } \sum_k c_{n,k}\leq c_n,
\end{equation}
as the optimal tree (resp. caterpillar) pair providing $c_n$ (resp. $s_n$) is also considered when $s_{n,k}$ (resp. $c_{n,k}$) is defined.

\begin{lemma}\label{lm:monotone} For $n\ge 1$, we have $s_{n,1}=s_{n,n}=c_{n,1}=c_{n,n}=1$. Furthermore,
\begin{enumerate}[label=\upshape{(\roman*)}]
\item\label{monotone2var} for all $n\ge 3$ and $k\in[n-1]$, we have
 $s_{n,k} <  s_{n+1,k+1}$ and $c_{n,k} <  c_{n+1,k+1}$;
\item \label{monotone1var} for all $n\ge 1$,  we have $s_{n} <  s_{n+1}$ and $c_{n} <  c_{n+1}$;
\item\label{trivterm} for all $n\ge 2$ and $ k\in[n-1]$, we have $\binom{n}{k-1}\le s_{n,k}$;
\item\label{lowerbound_on_s_n} for all $n\ge 1$, we have $s_n\ge 2^n-n$.
\end{enumerate}
\end{lemma}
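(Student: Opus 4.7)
The four items split naturally into two threads: a counting argument for trivial characters, which handles items \ref{trivterm} and \ref{lowerbound_on_s_n}, and a leaf-removal/extension argument, which handles \ref{monotone2var} and \ref{monotone1var}. First, the base values $s_{n,1}=s_{n,n}=c_{n,1}=c_{n,n}=1$ are immediate: the unique partition in $\mathfrak{P}_{n,1}$, namely $\{[n]\}$, is induced on every tree by removing no edges, and the unique partition in $\mathfrak{P}_{n,n}$, the all-singleton one, is induced by cutting every leaf edge.

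For \ref{monotone2var} and \ref{monotone1var} the plan is to use leaf deletion. Given a semilabeled binary tree $T$ of size $n+1$, let $T'$ denote the tree of size $n$ obtained by deleting leaf $n+1$ and suppressing the resulting degree-$2$ vertex; since this operation preserves the caterpillar property, the same construction works simultaneously in both settings. I will then define
\[
\psi:\mathfrak{P}(T')\longrightarrow\mathfrak{P}(T),\qquad \psi(\mathcal{P}')=\mathcal{P}'\cup\{\{n+1\}\},
\]
which is injective by inspection. Starting from an optimal pair $(T,F)$ witnessing $s_{n+1,k+1}$ and restricting to $(T',F')$, applying $\psi$ coordinatewise yields
\[
s_{n+1,k+1}\ge |\mathfrak{P}(T')\cap\mathfrak{P}(F')\cap\mathfrak{P}_{n,k}|\ge s_{n,k}.
\]
To upgrade to strict inequality I will exhibit one further common $(k+1)$-class partition lying outside the image of $\psi$: the trivial character $\{\{1\},\dots,\{k\},\{k+1,\dots,n+1\}\}$ is convex on every tree (so it belongs to $\mathfrak{P}(T)\cap\mathfrak{P}(F)$), is well-defined because $k\le n-1$ forces its last class to have size $\ge 2$, and does not have $\{n+1\}$ as a singleton class. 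The very same strategy handles \ref{monotone1var}, with distinguishing partition simply $\{[n+1]\}$.

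For \ref{trivterm} I will count trivial characters with exactly $k$ classes. Since $k\le n-1$, such a character consists of $k-1$ singletons together with one class of size $n-k+1\ge 2$; there are $\binom{n}{k-1}$ choices for the set of singletons, and every resulting partition lies in $\mathfrak{P}(T)\cap\mathfrak{P}(F)\cap\mathfrak{P}_{n,k}$ for every pair $(T,F)$ because trivial characters are convex on every tree. Item \ref{lowerbound_on_s_n} then follows by summing \ref{trivterm} over $k\in[n-1]$, adding the contribution $s_{n,n}=1$, and applying (\ref{sum_k}):
\[
s_n\ge \sum_{k=1}^{n-1}\binom{n}{k-1}+1 = \sum_{j=0}^{n-2}\binom{n}{j}+1 = 2^n-n.
\]

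The \emph{main obstacle} is the small piece of bookkeeping hidden in the well-definedness of $\psi$: one must verify that if $E'\subseteq E(T')$ induces $\mathcal{P}'$ on $T'$, then the edge set obtained by lifting $E'$ back along the subdivision of $T'$ that reinserts the internal vertex adjacent to leaf $n+1$, and appending the new leaf edge, induces $\psi(\mathcal{P}')$ on $T$. Once this correspondence between edge subsets of $T$ and of $T'$ is pinned down, every item reduces to elementary counting of trivial characters.
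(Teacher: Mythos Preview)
Your proposal is correct and follows essentially the same approach as the paper: the leaf-deletion map $\psi$ you describe is precisely the inverse of the paper's bijection between the set $X=\{\mathcal{P}\in\mathfrak{P}(T_1)\cap\mathfrak{P}(T_2)\cap\mathfrak{P}_{n+1,k+1}:\{n+1\}\in\mathcal{P}\}$ and $\mathfrak{P}(T_1')\cap\mathfrak{P}(T_2')\cap\mathfrak{P}_{n,k}$, and both you and the paper use the same trivial character $\{\{1\},\dots,\{k\},[n+1]\setminus[k]\}$ to force strictness in \ref{monotone2var}. The treatments of \ref{trivterm} and \ref{lowerbound_on_s_n} are likewise identical, and the bookkeeping you flag for the well-definedness of $\psi$ is exactly what the paper suppresses with the word ``Clearly''.
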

\begin{proof}
$1=s_{n,1}=c_{n,1}=s_{n,n}=c_{n,n}$ is obvious, as there is only one partition on $[n]$ that has $n$ partition classes and there is only one partition that has $1$ class, and these are clearly induced partitions for all semilabeled trees.

Let $T_1,T_2$ be trees with $n+1$ leaves for some $n\ge 1$.
Let $X\subseteq\mathfrak{P}(T_1)\cap\mathfrak{P}(T_2)\cap\mathfrak{P}_{n+1,k+1}$ be the set of partitions in which $\{n+1\}$ is a partition class.
For $i\in [2]$, create the $n$-leaf semilabeled trees $T_i'$ by removing the leaf labeled $n+1$ from $T_i$ and
 suppressing the degree 2 vertices created. 
Clearly, $\{\mathcal{P}\setminus \{\{n+1\}\}:\mathcal{P}\in X\}=\mathfrak{P}(T_1')\cap\mathfrak{P}(T_2')\cap\mathfrak{P}_{n,k}$, consequently
$|X|=\left\vert\mathfrak{P}(T_1')\cap\mathfrak{P}(T_2')\cap\mathfrak{P}_{n,k}\right\vert$. 
Moreover, 
if $n\ge 3$ and $k\in[n-1]$, then $\{\{i\}:i\in[k]\}\cup\{[n+1]-[k]\}\in (\mathfrak{P}(T_1)\cap\mathfrak{P}(T_2)\cap\mathfrak{P}_{n+1,k+1})\setminus X$,
in other words, $\left\vert\mathfrak{P}(T_1)\cap\mathfrak{P}(T_2)\cap\mathfrak{P}_{n+1,k+1}\right\vert>|X|$.
 Obviously when $T_1$ and $T_2$ are caterpillars, so are $T_1'$ and $T_2'$.

The strict inequalities in \ref{monotone2var} follow if we choose $T_1,T_2$ be optimal binary trees (resp. caterpillars) on $n+1$ leaves such that 
$|\mathfrak{P}(T_1)\cap\mathfrak{P}(T_2)\cap\mathfrak{P}_{n+1,k+1}|$ is $s_{n+1,k+1}$ (resp. $c_{n+1,k+1}$). 

If $T_1,T_2$ are optimal $(n+1)$-leaf trees (resp. caterpillars) providing the value $s_{n+1}$ (resp. $c_{n+1}$),  
 then
\begin{eqnarray*}
\left\vert\mathfrak{P}(T_1)\cap\mathfrak{P}(T_2)\right\vert&=&1+\sum_{k=1}^n\left\vert\mathfrak{P}(T_1)\cap\mathfrak{P}(T_2)\cap\mathfrak{P}_{n+1,k+1}\right\vert\\
&\ge&1+\sum_{k=1}^n\left\vert\mathfrak{P}(T_1')\cap\mathfrak{P}(T_2')\cap\mathfrak{P}_{n,k}\right\vert
>\left\vert\mathfrak{P}(T_1')\cap\mathfrak{P}(T_2')\right\vert,
\end{eqnarray*}
which implies~\ref{monotone1var}.

\ref{trivterm} holds since, for any two semilabeled binary trees, selecting any $k-1$ subset of leaf labels and deleting the corresponding
$k-1$ leaf edges results in the same size $k$ partition with from the two trees, but repeating this procedure for a different $k-1$ subset of leaves results
in a different partition as long as $k<n$. $s_1=1=2^1-1$, and for $n\ge 2$ the statement in \ref{lowerbound_on_s_n} follows 
from $s_{n,n}=1$, (\ref{sum_k}) and \ref{trivterm}.
\end{proof}

\begin{remark}
We are not aware of any examples where equality holds in (\ref{treeversuscat}), and it
would be interesting to show that these inequalities are always strict (see Section~\ref{sec:discuss}).
\end{remark}

\section{Lower bounds for $c_{n,k}$}\label{sec:lower-bounds}

In this section we prove the following lower bounds for $c_{n,k}$ and, as a corollary, also give 
a lower bound for $c_n$.

\begin{theorem} \label{symmet}
 Let $n$ be a multiple of $4$. If $n\ge 8$, then
for every $\ell$ with $\frac{n}{2} \leq \ell \leq n-4$,  
the partitions of any two caterpillars $T,F$ share some 
partitions consisting of exactly $2$ classes of size at least 2 and exactly $\ell$ singletons, in fact at least
\begin{equation}
\label{residuesum}
\sum_{i=\max(2,n-\ell-\frac{n}{4})}^{\min(n-\ell-2,\frac{n}{4})} \binom{n/4}{i}\binom{n/4}{n-\ell-i}
\end{equation}
many such partitions.
Consequently, for every $k$ with
$ \frac{3n}{4} \leq k \leq n-2$ we have
\begin{equation} \label{residuereduction}
c_{n,k}\geq {n\choose k-1}+\left(1-\frac{4}{n-k+3}\right) \binom{n/2 }{n-k+2}, 
\end{equation}
and for every $k$ with $\frac{n}{2}+2\le k\le \frac{3n}{4}-1$
\begin{equation} \label{smallresiduereduction}
c_{n,k}\geq {n\choose k-1}+\sum_{i=\frac{3n}{4}-k+2}^{\frac{n}{4}}\binom{n/4}{i}\binom{n/4}{n-k+2-i}.
\end{equation}
\end{theorem}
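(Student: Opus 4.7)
The plan is to exploit that, since $n$ is a multiple of $4$, each caterpillar has a distinguished \emph{middle backbone edge} whose removal splits the leaves into two halves of size exactly $n/2$. Fix caterpillars $T=T_\pi$ and $F=T_\sigma$, and let $L(T),R(T)$ and $L(F),R(F)$ denote these leaf halves. Cross-classifying the $n$ leaves gives four blocks
\[
LL=L(T)\cap L(F),\quad LR=L(T)\cap R(F),\quad RL=R(T)\cap L(F),\quad RR=R(T)\cap R(F).
\]
Since $|L(T)|=|L(F)|=n/2$, counting forces $|LL|=|RR|$ and $|LR|=|RL|$; writing $p=|LL|$, we have $|LR|=|RL|=n/2-p$ for some integer $0\le p\le n/2$, and therefore $\max(p,n/2-p)\ge n/4$.

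To prove the bound (\ref{residuesum}), I would construct coconvex partitions with two non-singleton classes and $\ell$ singletons by drawing these classes from a ``diagonal'' pair of blocks. Suppose first $p\ge n/4$; for each $i$ in the summation range, pick $P_1\subseteq LL$ with $|P_1|=i$ and $P_2\subseteq RR$ with $|P_2|=n-\ell-i$, which is possible because $i$ and $n-\ell-i$ are each at most $n/4\le p$. Set $\mathcal{P}=\{P_1,P_2\}\cup\{\{x\}:x\in[n]\setminus(P_1\cup P_2)\}$. Since $P_1\subseteq L(T)$ and $P_2\subseteq R(T)$, the intervals $[\min\pi^{-1}(P_1),\max\pi^{-1}(P_1)]$ and $[\min\pi^{-1}(P_2),\max\pi^{-1}(P_2)]$ lie in $\{1,\dots,n/2\}$ and $\{n/2+1,\dots,n\}$ respectively, so the characterisation of $\mathfrak{P}(T_\pi)$ recalled in Section~\ref{sec:preliminaries} places $\mathcal{P}$ in $\mathfrak{P}(T)$; the identical argument applied to $F$ puts $\mathcal{P}\in\mathfrak{P}(F)$. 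Distinct choices of $(P_1,P_2)$ yield distinct partitions because $P_1\subseteq LL$ and $P_2\subseteq RR$ sit in disjoint blocks. If instead $p<n/4$, run the symmetric construction with $P_1\subseteq LR$, $P_2\subseteq RL$, whose sets have size $n/2-p>n/4$. In either regime the count at level $i$ is at least $\binom{n/4}{i}\binom{n/4}{n-\ell-i}$, and summing over $i$ yields (\ref{residuesum}).

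The bounds on $c_{n,k}$ then follow by combining (\ref{residuesum}) with the $\binom{n}{k-1}$ trivial characters consisting of one non-singleton class and $k-1$ singletons, which are coconvex on any pair of trees (Lemma~\ref{lm:monotone}) and clearly disjoint from the two-non-singleton-class partitions above. Setting $\ell=k-2$: for $n/2+2\le k\le 3n/4-1$ the summation range in (\ref{residuesum}) simplifies to $[3n/4-k+2,n/4]$, so (\ref{smallresiduereduction}) is immediate. For $3n/4\le k\le n-2$ the range simplifies to $[2,n-k]$, and the Vandermonde identity $\sum_{i=0}^{n-k+2}\binom{n/4}{i}\binom{n/4}{n-k+2-i}=\binom{n/2}{n-k+2}$ rewrites the sum as $\binom{n/2}{n-k+2}$ minus the four boundary terms $i\in\{0,1,n-k+1,n-k+2\}$, which total $2\binom{n/4}{n-k+2}+(n/2)\binom{n/4}{n-k+1}$.

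The main obstacle is the final algebraic step for (\ref{residuereduction}): bounding this boundary contribution above by $\tfrac{4}{n-k+3}\binom{n/2}{n-k+2}$. Writing $m=n-k+2$, this is the assertion that the four extreme terms in the Vandermonde expansion of $\binom{n/2}{m}$ do not exceed $4/(m+1)$ times the full sum, i.e.\ do not exceed the flat average. Since the terms $\binom{n/4}{j}\binom{n/4}{m-j}$ are maximised near $j=m/2$, this ``boundary $\le$ average'' inequality is plausible in general, but the bookkeeping — including the small edge cases $k\in\{3n/4,3n/4+1\}$ where some of the binomials vanish — is where care is needed.
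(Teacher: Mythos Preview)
Your construction is essentially identical to the paper's: split each caterpillar at its middle backbone edge into two halves of size $n/2$, cross-classify into four blocks, observe that one diagonal pair of blocks each has size at least $n/4$, and build the two non-singleton classes from that pair. The paper uses the notation $A\cap X$, $B\cap Y$, etc.\ in place of your $LL$, $RR$, but the argument is the same line for line.

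Your hesitation about the final step for (\ref{residuereduction}) is unnecessary. With $m=n-k+2$, the Vandermonde terms $a_i=\binom{n/4}{i}\binom{n/4}{m-i}$ for $i=0,\dots,m$ form a symmetric unimodal sequence (zeros at the ends when $m>n/4$ do not spoil this), so $a_0,a_1,a_{m-1},a_m$ are the four \emph{smallest} of the $m+1$ terms. The sum of the four smallest of any $m+1$ nonnegative numbers is at most $\tfrac{4}{m+1}$ of the total, which is exactly the bound $\tfrac{4}{m+1}\binom{n/2}{m}$ you need. No separate bookkeeping for the edge cases $k\in\{3n/4,3n/4+1\}$ is required: some boundary terms vanish there, which only helps.
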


\begin{proof}  
Note that for $n\ge 8$ and $\ell\le n-4$ we have $2\le \min(n-\ell-2,\frac{n}{4})$, and for $n\ge 8$ and $\ell\ge\frac{n}{2}$ 
we have $n-\ell-\frac{n}{4}\le \min(n-\ell-2,\frac{n}{4})$, so the sum in (\ref{residuesum}) has a non-empty range.

Without loss of generality take $T=T_{\id_n}$ and $F=T_\pi$ for an arbitrary permutation $\pi$.
 Consider  $A=\{1,2,...,\frac{n}{2}\}$, $B=\{\frac{n}{2}+1,...,n\}$, $X=\pi(A)=\{\pi(1),\pi(2),...,\pi(\frac{n}{2})\}$, $Y=\pi(B)=\{\pi(\frac{n}{2}+1),...,\pi(n)\}$.
Observe $n=|A\cap X|+ |A\cap Y|+|B\cap X|+ |B\cap Y|$, and hence 
$$\frac{n}{2}\leq \max \left(|A\cap X|+|B\cap Y|, |A\cap Y|+|B\cap X|\right).$$
Assume that the first term achieves the maximum (the other case will follow similarly). Set $m=|A\cap X|$.
Then  $|B\cap Y|=|Y|-|A\cap Y|=|A|-|A\cap Y|=|A\cap X|=m$, and,  as $2m\ge\frac{n}{2}$, $m\ge\frac{n}{4}$.

Fix an $\ell$ such that $\frac{n}{2} \le\ell\le n-4$. 

Set $t=n-\ell\leq \frac{n}{2}$. Then $4\le t\le \frac{n}{2}\le 2m$. For a set $C$ and integer $j:0\le j\le |C|$, $\binom{C}{j}$ denotes the collection of  $j$-element subsets of $C$.
For every $i:\max(2, t-\frac{n}{4})\le i\le \min(t-2,\frac{n}{4})$, define the family of partitions
$$\mathbb{Z}_{i,t}=\left\{\{Z_1,Z_2\}\cup \left\{\{j\}:j\in[n]\setminus(Z_1\cup Z_2)\right\}: Z_1\in\binom{A\cap X}{i}, 
Z_2\in\binom{B\cap Y}{t-i}\right\}.$$
As per the conditions we have $2\le i\le\frac{n}{4}\le m$ and $2\le t-i\le\frac{n}{4}\le m$, $\mathbb{Z}_{i,t}$ is well defined.
As $t-2\ge 2$ and $t-\frac{n}{4}\le\frac{n}{4}$, for every $t$ there is at least one $i$ to be selected, so $\mathbb{Z}_{i,t}\ne\emptyset$. 
Clearly,
$\mathbb{Z}_{i,t}\subseteq\mathfrak{P}(T)\cap\mathfrak{P}(F)\cap\mathfrak{Q}_{n,2}\cap\mathfrak{P}_{n}^{\ell}$ and 
$|\mathbb{Z}_{i,t}|=\binom{m}{i}\binom{m}{t-i}\ge\binom{n/4}{i}\binom{n/4}{t-i}$. In particular,
\begin{eqnarray*}
\left\vert\mathfrak{P}(T)\cap\mathfrak{P}(F)\cap \mathfrak{Q}_{n,2}\cap\mathfrak{P}_n^{\ell}\right\vert\ge\sum_{i=\max(2,t-\frac{n}{4})}^{\min(t-2,\frac{n}{4})} \binom{n/4}{i}\binom{n/4}{t-i},
\end{eqnarray*}
This proves (\ref{residuesum}).

Assume now $\ell\geq \frac{3n}{4}-2$, and consequently $t=n-\ell\leq \frac{n}{4}+2$.
Now we have 
\begin{eqnarray*}
c_{n,\ell+2}&-&\binom{n}{\ell+1}\ge\sum_{i=\max(2,t-\frac{n}{4})}^{\min(t-2,\frac{n}{4})} \binom{n/4}{i}\binom{n/4}{t-i}
= \sum_{i=2}^{t-2} \binom{n/4}{i}\binom{n/4}{t-i}\\
&\geq& \binom{n/2}{t}\left( 1- \frac{4}{t+1}\right)= \binom{n/2}{n-\ell}\left( 1- \frac{4}{n-\ell+1}\right)  ,
\end{eqnarray*}
as the terms in this truncated Vandermonde convolution are symmetric and unimodal, and the missing 4 terms are at most 4 times the average expansion term.
This yields (\ref{residuereduction}).

On the other hand, if $\frac{n}{2}\le \ell <\frac{3n}{4}-2$, then for $t=n-\ell$ we have $\frac{n}{4}<t-2\le\frac{n}{2}$,
 $\min(t-2,\frac{n}{4})=\frac{n}{4}$ and $\max(2,t-\frac{n}{4})=t-\frac{n}{4}$, and as before, we have
\begin{eqnarray*}
c_{n,\ell+2}&-&\binom{n}{\ell+1}\ge
\sum_{i=t-\frac{n}{4}}^{\frac{n}{4}} \binom{n/4}{i}\binom{n/4}{t-i}
=\sum_{i=\frac{3n}{4}-\ell}^{\frac{n}{4}}\binom{n/4}{i}\binom{n/4}{n-\ell-i},
\end{eqnarray*}
which gives (\ref{smallresiduereduction}).
\end{proof}

We remark here that  $c_{n,n-1}=\binom{n}{n-3}$.

\begin{corollary}  \label{lowerbound_c_n}
We have  $c_n\geq 2^n+(\frac{1}{2}-o(1))2^{2\lfloor \frac{n}{4}\rfloor}.$ 
\end{corollary}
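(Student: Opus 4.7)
The trivial lower bound $c_n\ge 2^n-n$ already follows from (\ref{sum_k}), $c_{n,n}=1$, and part~\ref{trivterm} of Lemma~\ref{lm:monotone}, so the task is to extract an additional $(\tfrac{1}{2}-o(1))2^{2\lfloor n/4\rfloor}$ coming from non-trivial coconvex partitions. I would handle the case $n\equiv 0\pmod 4$ directly via Theorem~\ref{symmet}, and reduce general $n$ to that case by restricting to the first $m=4\lfloor n/4\rfloor$ leaves and adjoining singletons.

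Assume $n\equiv 0\pmod 4$ with $n\ge 8$. Combining (\ref{sum_k}), $c_{n,n}=1$, part~\ref{trivterm} of Lemma~\ref{lm:monotone}, and (\ref{residuereduction}) for $k\in[3n/4,n-2]$, and substituting $j=n-k+2$ in the last sum, one obtains
\begin{align*}
c_n &\ge 1+\sum_{k=1}^{n-1}\binom{n}{k-1}+\sum_{j=4}^{n/4+2}\left(1-\frac{4}{j+1}\right)\binom{n/2}{j}\\
&= 2^n-n+\sum_{j=4}^{n/4+2}\left(1-\frac{4}{j+1}\right)\binom{n/2}{j}.
\end{align*}
The symmetry of $\binom{n/2}{j}$ about $j=n/4$ together with the Stirling bound $\binom{n/2}{n/4}=O(2^{n/2}/\sqrt{n})$ gives $\sum_{j=0}^{n/4+2}\binom{n/2}{j}=(\tfrac{1}{2}+o(1))2^{n/2}$. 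The $j\le 3$ terms contribute only $O(n^3)$, and the factor $1-4/(j+1)$ is $1-o(1)$ once $j\to\infty$ (cut the sum off at, say, $j=\lceil\sqrt{\log n}\,\rceil$, noting that the binomial mass below this threshold is negligible by a Chernoff tail estimate), so the inner sum is $(\tfrac{1}{2}-o(1))2^{n/2}=(\tfrac{1}{2}-o(1))2^{2\lfloor n/4\rfloor}$, settling this case.

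For general $n\ge 8$, set $m=4\lfloor n/4\rfloor\ge 8$ and fix any pair $T,F$ of caterpillars of size $n$. The restrictions $T|_{[m]}$ and $F|_{[m]}$ (obtained by deleting the leaves $m+1,\ldots,n$ and suppressing the resulting degree-$2$ vertices) are $m$-caterpillars, and the previous paragraph applied to them produces at least $(\tfrac{1}{2}-o(1))2^{m/2}$ coconvex partitions $\mathcal{P}'$ on $[m]$ with exactly two classes of size at least $2$. Each lift $\mathcal{P}=\mathcal{P}'\cup\{\{x\}:x\in[n]\setminus[m]\}$ lies in $\mathfrak{P}(T)\cap\mathfrak{P}(F)$: an edge set of $T|_{[m]}$ inducing $\mathcal{P}'$ pulls back through the suppressions to an edge set of $T$ which, augmented by the $n-m$ leaf edges to the discarded leaves, induces $\mathcal{P}$ on $T$ (and similarly on $F$). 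Distinct $\mathcal{P}'$ have distinct lifts, and each lift has two non-singleton classes, so is disjoint from the $2^n-n$ trivial coconvex partitions of $[n]$ counted by part~\ref{lowerbound_on_s_n} of Lemma~\ref{lm:monotone}. Summing the two sources and absorbing $-n$ into $o(\cdot)$ yields $|\mathfrak{P}(T)\cap\mathfrak{P}(F)|\ge 2^n+(\tfrac{1}{2}-o(1))2^{2\lfloor n/4\rfloor}$, as desired.

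The main technical point is the lifting step: one must verify that after reinstating the suppressed degree-$2$ vertices of $T|_{[m]}$ and cutting each leaf edge going to a discarded leaf, the resulting edge cut of $T$ still witnesses convexity of $\mathcal{P}$. Once this correspondence between edge cuts of a caterpillar and edge cuts of its leaf-restriction is in place, the proof reduces to a routine combination of Theorem~\ref{symmet} with a standard binomial tail estimate.
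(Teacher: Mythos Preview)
Your proposal is correct and follows essentially the same approach as the paper: sum the excess term from (\ref{residuereduction}) over $k$ in the range $[3n/4,n-2]$ to harvest $(\tfrac12-o(1))2^{n/2}$ non-trivial coconvex partitions when $4\mid n$, and for general $n$ delete the last $n-4\lfloor n/4\rfloor$ leaves and lift the resulting partitions back by adjoining singletons. The only cosmetic difference is your choice of cutoff for the $(1-4/(j+1))$ factor (you threshold at $j\approx\sqrt{\log n}$, the paper at $j\gtrsim 0.1n$); both suffice, and your lifting step is spelled out a bit more explicitly than the paper's.
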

\begin{proof} 
Consider $n\ge 52$. This implies that $0.9n+3< n-2$, and for $k\le 0.9n+3$ we have $n-k+3\ge 0.1n$ and $1-\frac{4}{n-k+3}\ge 1-\frac{40}{n}$.  

First assume that  $n$ is a multiple of 4, and sum up the conclusion of Theorem~\ref{symmet}:
\begin{eqnarray*}
c_n-2^n+n&\ge&\sum_{k=1}^{n-2}\left(c_{n,k}-\binom{n}{k-1}\right)
\geq \sum_{k=\frac{3n}{4}}^{0.9n+3}\binom{n/2}{n-k+2}\left( 1- \frac{4}{n-k+3}\right)\\
&\geq& \left(1-\frac{40}{n}\right)\sum_{k=\frac{3n}{4}}^{0.9n+3}\binom{n/2}{n-k+2}
\geq\left(\frac{1}{2}-o(1)\right)2^{n/2},
\end{eqnarray*}
where in the last step we used the normal convergence of the binomial coefficients.

If $n$ is not a multiple of $4$, i.e. $n=4\lfloor \frac{n}{4}\rfloor +r $ with $r=1,2$ or $3$,
then on the one hand $\mathfrak{P}(T)\cap\mathfrak{P}(F)$ contains $2^n-n$ partitions with at most one class of size at least 2, while after
removing leaves $4\lfloor \frac{n}{4}\rfloor +1,...,4\lfloor \frac{n}{4}\rfloor +r$ from the trees $T$ and $F$ allows us 
to repeat the same argument,
using the second term from
(\ref{residuereduction}) with $4\lfloor \frac{n}{4}\rfloor$ substituted to the place of $n$, with a slightly changed range for $\ell$.
\end{proof}

\section{Caterpillars avoiding non-trivial common partitions}\label{sec:avoid}

In this section, we shall show that for small values of $k$, $c_{n,k}={n\choose k-1}$ actually holds.
To this end, we need to first make some observations concerning properties of partitions and sets of integers.

First, for partitions of sets of integers, we introduce the concept of a {\it gap}. For a finite set $X$ of positive integers, define
$\gap(X)=(\max\, X)- (\min\, X) +1-|X|$. Less formally, writing the elements of $X$ in increasing order, 
$\gap(X)$ is the number of integers that are missing
between consecutive elements of $X$.
For a family $\mathcal{X}$ of disjoint sets of positive integers (e.g., a partition of $[n]$), define $\gap(\mathcal{X})=\sum_{X\in \mathcal{X}} \gap(X)$.
Observe that 
\begin{equation} \label{positivity}
\gap(X)\geq 0\,  \text{\  and \ }\gap(\mathcal{X})\geq 0.
\end{equation}
Assume that every element of $X$ gives the same residue modulo $q$. Then it is easy to see that 
\begin{equation} \label{paritygap}
\gap(X)\geq (q-1)( |X|-1).
\end{equation} 

\begin{lemma}\label{lm:gap} The following hold
\begin{enumerate}[label=\upshape{(\roman*)}]
\item\label{idgap} If $\mathcal{P}\in\mathfrak{P}(T_{\id_n})$, then  $\gap(\mathcal{P})\le \left\vert S(\mathcal{P})\right\vert$.
\item If $\mathcal{P}\in\mathfrak{Q}_{n,0}$, then $\gap(\mathcal{P})=0$.
\item\label{standardgap} If $\mathcal{P}\in\mathfrak{Q}_{n,s}$ for some $s\ge 1$ and $\{P_1,\ldots,P_k\}$ is a standard listing of $\mathcal{P}$, 
then
$\gap(\mathcal{P})=\sum_{i=1}^s\gap(P_i)$.
\end{enumerate}
\end{lemma}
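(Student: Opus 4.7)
Parts (ii) and (iii) are purely bookkeeping and I would dispose of them first. Any singleton class $\{c\}$ has $\gap(\{c\}) = 0$, so for $\mathcal{P} \in \mathfrak{Q}_{n,0}$ every class contributes zero and $\gap(\mathcal{P}) = 0$; for the standard listing in (iii), the classes $P_{s+1},\ldots,P_k$ are all singletons and drop out of the sum, leaving $\gap(\mathcal{P}) = \sum_{i=1}^s \gap(P_i)$.

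The substantive claim is (i). My plan is to exploit the fact that, because the leaves of $T_{\id_n}$ appear in their natural order along the backbone, each non-singleton class of a partition $\mathcal{P} \in \mathfrak{P}(T_{\id_n})$ corresponds to a connected sub-path of the backbone, together with the leaves attached to it whose pendant edges have not been cut (this is exactly the edge-set description of $\mathfrak{P}(T_{\id_n})$ recalled in the preliminaries; one just has to remember that the endpoints $b_2$ and $b_{n-1}$ carry two pendant leaves instead of one). From this picture I will extract two observations. First, if $C_1, C_2$ are distinct non-singleton classes of $\mathcal{P}$, then the intervals $[\min C_1, \max C_1]$ and $[\min C_2, \max C_2]$ are disjoint in $[n]$, since the two corresponding backbone sub-paths are disjoint and the set of leaves attached to a sub-path of the backbone of $T_{\id_n}$ is itself an interval of integers. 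Second, for any non-singleton class $C$ of $\mathcal{P}$, every integer $x \in [\min C, \max C] \setminus C$ belongs to $S(\mathcal{P})$: the leaf $x$ is attached to a backbone vertex lying inside $C$'s sub-path, so if the pendant edge at $x$ had not been cut, $x$ would have ended up in the same component as $C$; since $x \notin C$, the pendant edge at $x$ is cut and $x$ is a singleton of $\mathcal{P}$.

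Combining these two observations, for each non-singleton class $C$ one has $\gap(C) = |[\min C, \max C] \setminus C| \le |S(\mathcal{P}) \cap [\min C, \max C]|$, and summing over the non-singleton classes while using the disjointness of their min-max intervals gives $\gap(\mathcal{P}) \le |S(\mathcal{P})|$, which is (i). The only real obstacle is the first structural observation, the disjointness of min-max ranges; the boundary vertices $b_2$ and $b_{n-1}$ require a line or two of extra care, but no genuinely new idea beyond unpacking the backbone-plus-pendant description of $\mathfrak{P}(T_{\id_n})$ is needed.
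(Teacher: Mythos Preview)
Your argument is correct. The paper actually states this lemma without proof, evidently regarding all three parts as routine; your write-up supplies exactly the details one would expect, and the two key structural observations for part~(i) --- that the min--max intervals of distinct non-singleton classes of a partition in $\mathfrak{P}(T_{\id_n})$ are disjoint, and that any integer in such an interval but outside its class must be a singleton --- are precisely what the paper's discussion of standard listings (the paragraph preceding the lemma) is implicitly setting up.
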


Now, for a set of positive integers $A$, $A_o$ and $A_e$ denotes the set of odd and even integers in $A$, respectively. We use the interval notation
$[a,b]=\{z\in\mathbb{Z}: a\le z\le b\}$. In other words, $[n]=[1,n]$ for positive integers $n$.
The following results will be frequently used in the proof of the next theorem. They
are straight-forward to show and  therefore presented without proof.
\begin{observation}\label{cl:cutting}
Let $1\le a<b$ be integers and $x\in[a,b]$. Then we have  
$$
\left\vert\left[a,x-1\right]_o\right\vert+\left\vert\left[x+1,b\right]_e\right\vert=
\begin{cases}
\left\vert\left[a,b\right]_e\right\vert-1, &a\equiv  0\mod 2 \\
\left\vert\left[a,b\right]_e\right\vert, &a\equiv1\mod 2\\
\left\vert\left[a,b\right]_o\right\vert-1, &b\equiv  1\mod 2 \\
\left\vert\left[a,b\right]_o\right\vert, &b\equiv0\mod 2
\end{cases}
$$
and
$$
\left\vert\left[a,x-1\right]_e\right\vert+\left\vert\left[x+1,b\right]_o\right\vert=
\begin{cases}
\left\vert\left[a,b\right]_o\right\vert-1, &a\equiv  1\mod 2 \\
\left\vert\left[a,b\right]_o\right\vert, &a\equiv0\mod 2\\
\left\vert\left[a,b\right]_e\right\vert-1, &b\equiv  0\mod 2 \\
\left\vert\left[a,b\right]_e\right\vert, &b\equiv1\mod 2.
\end{cases}
$$
Consequently, we have 
$$\min\Big(\left\vert\left[a,x-1\right]_o\right\vert+\left\vert\left[x+1,b\right]_e\right\vert,
\left\vert\left[a,x-1\right]_e\right\vert+\left\vert\left[x+1,b\right]_o\right\vert\Big)
\ge\left\lceil\frac{b-a-1}{2}\right\rceil.
$$
\end{observation}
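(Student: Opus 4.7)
The proof is essentially an arithmetic identity, so my plan is to reduce the eight-way parity case analysis into a single difference. Set $A=\bigl\vert[a,x-1]_o\bigr\vert+\bigl\vert[x+1,b]_e\bigr\vert$ and $B=\bigl\vert[a,x-1]_e\bigr\vert+\bigl\vert[x+1,b]_o\bigr\vert$, and let $[P]\in\{0,1\}$ denote the Iverson bracket of a statement $P$. Immediately $A+B=(x-a)+(b-x)=b-a$ because $[a,x-1]$ and $[x+1,b]$ partition $[a,b]\setminus\{x\}$.

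To pin down $A-B$ I would introduce $d(p,q):=\bigl\vert[p,q]_o\bigr\vert-\bigl\vert[p,q]_e\bigr\vert$ and note that, for any $p\le q+1$, one has $d(p,q)=[p\text{ odd}]+[q\text{ odd}]-1$ (the empty case $p=q+1$ still satisfies this, as $p$ and $q$ then have opposite parities). Since $x-1$ and $x+1$ share a parity, their bracket contributions cancel, giving
\begin{equation*}
A-B=d(a,x-1)-d(x+1,b)=[a\text{ odd}]-[b\text{ odd}]\in\{-1,0,1\}.
\end{equation*}
Solving the linear system $A+B=b-a$, $A-B\in\{-1,0,1\}$ in each of the four parity combinations of $(a,b)$, and rewriting the result in terms of $\vert[a,b]_o\vert$ and $\vert[a,b]_e\vert$ (whose values are completely determined by the parities of $a$ and $b$) recovers precisely the four case-formulas stated for $A$. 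The four formulas for $B$ are obtained identically, swapping odd and even throughout.

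For the ``consequently'' statement, pairing $A+B=b-a$ with $|A-B|\le 1$ gives
\begin{equation*}
\min(A,B)=\tfrac{(A+B)-|A-B|}{2}\ge\tfrac{b-a-1}{2},
\end{equation*}
and integrality of $\min(A,B)$ rounds this up to $\lceil\tfrac{b-a-1}{2}\rceil$. I do not foresee any real obstacle: the only subtlety is the degenerate cases $x=a$ and $x=b$, where one of the intervals $[a,x-1]$, $[x+1,b]$ is empty, but these are absorbed automatically by the empty-interval extension of $d(p,q)$ above, so no separate bookkeeping is needed.
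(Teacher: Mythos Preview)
Your argument is correct and clean. The paper explicitly omits the proof of this observation (``straight-forward to show and therefore presented without proof''), so there is nothing to compare against; your reduction via $A+B=b-a$ and $A-B=[a\text{ odd}]-[b\text{ odd}]$ is an efficient way to collapse the case analysis, and your handling of the degenerate endpoints $x=a$, $x=b$ through the empty-interval convention for $d(p,q)$ is sound.
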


Armed with the above observations, we now prove the main result of this section.

\begin{theorem} \label{megegyezik} For any $n\geq 1$ and 
$1\le k\le \lceil\frac{n}{3}\rceil$, we have 
 $c_{n,k}=\binom{n}{k-1}$. 
 \end{theorem}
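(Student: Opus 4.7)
The inequality $c_{n,k}\ge\binom{n}{k-1}$ is already supplied by Lemma~\ref{lm:monotone}\ref{trivterm}, since choosing any $k-1$ leaves to be singletons yields a trivial $k$-class partition (one large class together with the $k-1$ singletons) that is convex on every caterpillar. It remains to exhibit two caterpillars $T,F$ of size $n$ with $\mathfrak{P}(T)\cap\mathfrak{P}(F)\cap\mathfrak{P}_{n,k}\cap\mathfrak{Q}_n=\emptyset$, i.e., no non-trivial $k$-class partition is convex on both. I would take $T=T_{\id_n}$ and $F=T_\pi$ for a permutation $\pi$ with a rigid parity structure (the odd--even shuffle $\pi=(1,3,5,\ldots,2,4,6,\ldots)$ is the model case, possibly tuned further to rule out small corner cases such as $\{\{2\},\{1,3\},[4,n]\}$ when $k=3$).

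Argue by contradiction: suppose $\mathcal{P}\in\mathfrak{Q}_{n,s}\cap\mathfrak{P}_{n,k}$ with $s\ge2$ is convex on both trees. By Lemma~\ref{lm:gap}\ref{standardgap} applied to each caterpillar, $\gap(\mathcal{P})\le k-s$ and $\gap(\pi^{-1}(\mathcal{P}))\le k-s$. Call a non-singleton class of $\mathcal{P}$ \emph{mono} if all its labels have the same parity (so that (\ref{paritygap}) with $q=2$ forces $\gap(P_j)\ge|P_j|-1$) and \emph{mixed} otherwise. Because the $\pi$-intervals of distinct non-singleton classes are pairwise disjoint and the parity boundary of the $\pi$-order lies inside at most one of them, at most one non-singleton class is mixed. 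In the all-mono case, summing (\ref{paritygap}) yields $\gap(\mathcal{P})\ge(n-k+s)-s=n-k$; combined with $\gap(\mathcal{P})\le k-s$ and $s\ge2$ this forces $k\ge(n+s)/2>\lceil n/3\rceil$, contradiction. Otherwise, exactly one class $P_0$ is mixed; the analogous summation over the remaining $s-1$ mono classes yields $|P_0|+\gap(P_0)\ge n-2k+s+1$, so the identity interval $I_0=[\min P_0,\max P_0]$ is forced to be long.

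The heart of the proof is converting this ``long $I_0$'' conclusion into a violation of $\gap(\pi^{-1}(P_0))\le k-s$. I would apply Observation~\ref{cl:cutting} to $(a,b)=(\min P_0,\max P_0)$ with a singleton $x\in I_0\setminus P_0$ of $\mathcal{P}$ as cut point (the boundary case $\gap(P_0)=0$ is disposed of separately, by showing that $|I_0|=|P_0|$ together with $|P_0|\ge n-2k+s+1$ leaves fewer than $2(s-1)$ positions in $[n]\setminus I_0$ for the remaining $s-1$ non-singleton mono classes of size $\ge2$ each, a contradiction when $k\le\lceil n/3\rceil$). Under the odd/even-respecting action of $\pi$, the cross-parity count $\ge\lceil(|I_0|-2)/2\rceil$ provided by Observation~\ref{cl:cutting} translates into positions inside the $\pi$-interval $I'_0=[\min\pi^{-1}(P_0),\max\pi^{-1}(P_0)]$ that are not in $\pi^{-1}(P_0)$; hence $\gap(\pi^{-1}(P_0))\ge\lceil(|I_0|-2)/2\rceil$, and for $k\le\lceil n/3\rceil$ this bound exceeds the remaining budget $k-s$, closing the contradiction. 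The delicate step throughout is the bookkeeping between the singletons of $\mathcal{P}$ inside $I_0$, the odd/even split of $P_0$, and their corresponding $\pi$-positions; the particular choice of $\pi$ must be made precisely so that this translation goes through and the bound binds exactly at $k=\lceil n/3\rceil$.
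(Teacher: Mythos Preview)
Your overall strategy matches the paper's: both use $T=T_{\id_n}$ and $F=T_\pi$ for a parity-structured permutation $\pi$, both invoke the gap inequality $\gap(\mathcal{P})\le k-s$ from Lemma~\ref{lm:gap}\ref{idgap}, both observe that at most one non-singleton class $P_t$ can be ``mixed'' (contain both parities), and both handle the all-mono case by summing~(\ref{paritygap}) to get $k\ge(n+s)/2$. The paper also derives $|P_t|\ge n-2k+s+1\ge\lceil n/3\rceil+1$ exactly as you do.

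The genuine gap is that the simple odd--even shuffle $\pi=(1,3,5,\dots,2,4,6,\dots)$ does not work, and the failure is structural rather than a corner case. Your own example $\{\{2\},\{1,3\},[4,n]\}$ already shows this for $k=3$, but the pattern persists: for any $n\ge12$ and $k=\lceil n/3\rceil$, partitions such as $\{\{1,3\}\}\cup\{\{2\},\{4\},\dots,\{k\}\}\cup\{[k+1,n]\}$ are convex on both trees. In such examples the mixed class $P_0=[k+1,n]$ has $\gap(\pi^{-1}(P_0))$ of order~$k$, not of order $|I_0|/2$; your claimed translation ``$\gap(\pi^{-1}(P_0))\ge\lceil(|I_0|-2)/2\rceil$ via Observation~\ref{cl:cutting}'' is simply false for this $\pi$. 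The issue is that nothing about the simple shuffle prevents a small mono class like $\{1,3\}$ (adjacent in both orders) from coexisting with a long mixed interval.

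The paper's fix is a substantially more intricate $\pi$: list $[1,\lceil 2n/3\rceil]_o$ \emph{decreasing}, then $[\lceil 2n/3\rceil{+}1,n]_o$ increasing, then $[\lceil 2n/3\rceil{+}1,n]_e$ decreasing, then $[1,\lceil 2n/3\rceil]_e$ increasing. This three-zone structure is what makes Observation~\ref{cl:cutting} bite: once $|P_t|>\lceil n/3\rceil$ one shows $\min P_t\le\lceil n/3\rceil$ and $\max P_t>\lceil 2n/3\rceil$, and then a case analysis on which of the middle-zone parity blocks $P_t$ meets forces entire half-blocks (e.g.\ $[1,\lceil n/3\rceil]_o$) into $S(\mathcal{P})$, yielding $|S(\mathcal{P})|\ge\lceil n/3\rceil-1$ and hence $k>\lceil n/3\rceil$. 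The ``tuning'' you allude to is thus the heart of the construction, not a cosmetic adjustment; without specifying it, the mixed-class branch of your argument does not close.
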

 \begin{proof} 
The statement is true for $n\le 3$, as in this case  $\lceil\frac{n}{3}\rceil=1$, and $c_{n,1}=1=\binom{n}{0}$. 
  We assume for the rest of the proof that $n\ge 4$,
 i.e., $1\le \lfloor\frac{n}{3} \rfloor$ and $2\le\lceil\frac{n}{3}\rceil$. 
 
  Let $\pi$ be the permutation that lists the elements of the sets as given:
 first the elements of $\left[1,\lceil \frac{2n}{3}\rceil \right]_o$ in decreasing  order, 
 then  the elements of $\left[\lceil\frac{2n}{3}\rceil+1,n\right]_o$ in increasing order, 
 then the elements of $\left[\lceil\frac{2n}{3}\rceil+1,n\right]_e$ in decreasing order,
 then the elements of $[1,\lceil\frac{2n}{3}\rceil]_e$ in increasing order.  Note that $\pi$ lists the elements of 
 $[\lceil\frac{n}{3}\rceil+1,\lceil\frac{2n}{3}\rceil]_o$ before the elements of 
 $[1,\lceil \frac{n}{3}\rceil]_o$, and it lists the elements of
$[\lceil\frac{n}{3}\rceil+1,\lceil\frac{2n}{3}\rceil]_e$ after the elements of $[1,\lceil\frac{n}{3}\rceil]_e$.
Moreover,  $n=\lceil\frac{2n}{3}\rceil+\lfloor\frac{n}{3}\rfloor$ and 
$\lceil\frac{2n}{3}\rceil\ge\lceil\frac{n}{3}\rceil+\lfloor\frac{n}{3}\rfloor$.
In particular, none of the intervals
$[1,\lceil\frac{n}{3}\rceil]$, $[\lceil\frac{n}{3}\rceil+1,\lceil\frac{2n}{3}\rceil]$ and $[\lceil\frac{2n}{3}\rceil +1,n]$ is empty.

Assume to the contrary that for some $k\le\lceil \frac{n}{3}\rceil$ we have $\mathfrak{P}(T_{\id_n})\cap\mathfrak{P}(T_\pi)\cap\mathfrak{Q}_n\cap\mathfrak{P}_{n,k}\ne\emptyset$. 
Then for some $2\le s\le k\le\lceil\frac{n}{3}\rceil$ we have a 
$\mathcal{P}\in\mathfrak{P}(T_{\id_n})\cap\mathfrak{P}(T_{\pi})\cap\mathfrak{Q}_{n,s}\cap\mathfrak{P}_{n,k}$.
In particular, $|S(\mathcal{P})|=k-s$.

Let $\{P_1,\ldots,P_k\}$ be a standard listing of $\mathcal{P}$ with respect to $\pi$.
Since $\pi([n/2])=[n]_o$, there is a $t$ with $1\le t\le s$ such that for every $i$ with $1\le i<t$, $P_i$ contains only odd integers, 
and for every $i$ with $t<i\le s$,  $P_i$ contains only even integers. 
Moreover, as $\mathcal{P}\in\mathfrak{P}(T_{\id_n})$, we also
have that for every $i\in[s]\setminus\{t\}$ we have $P_i\subseteq [1,\min(P_t)-1]$ or $P_i\subseteq[\max(P_t)+1,n]$;
in particular $[\min(P_t),\max(P_t)]\setminus P_t\subseteq S(\mathcal{P})$.

We will bound $\gap(\mathcal{P})$.
By Lemma~\ref{lm:gap}~\ref{idgap}, $\gap(\mathcal{P})\le|S(\mathcal{P})|=k-s$. Using (\ref{paritygap}) with $q=2$, we have that
$\gap(P_i)\ge |P_i|-1$ for all $i\in[s]\setminus\{t\}$, therefore
\begin{eqnarray*}
\gap(\mathcal{P})&\ge& \sum_{i\in[s]\setminus\{t\}}\gap(P_i)\geq \left(\sum_{i\in[s]\setminus\{t\}}|P_i|\right)-(s-1)
=  \left\vert[n]\setminus\left(P_t\cup S(\mathcal{P})\right)\right\vert-s+1\\
&=& n-\left(|P_t|+k-s\right)-s+1
= n-k+1-|P_t|. 
\end{eqnarray*}
Comparing these two bounds gives
$$|P_t|\ge n-2k+s+1\ge n-2\left\lceil\frac{n}{3}\right\rceil+3\ge\left\lceil\frac{n}{3}\right\rceil+1.$$ 

If $P_t$ contains only odd or only even integers, then $|[\min(P_t),\max(P_t)]|\ge 2|P_t|-1$.
Since $[\min(P_t),\max(P_t)]\setminus P_t\subseteq S(\mathcal{P})$, we have
$k\ge \left\vert S(\mathcal{P})\right\vert+2 \ge (|P_t|-1)+2>\lceil\frac{n}{3}\rceil$,
a contradiction. Therefore $P_t$ must contain both even and odd integers, $\min(P_t)\le n-|P_t|+1\le \lceil\frac{2n}{3}\rceil$ and
$\max(P_t)\ge |P_t|>\lceil \frac{n}{3}\rceil $.
In addition, we also have 
$$\min(P_t)\le \left\lceil\frac{n}{3}\right\rceil \text{ or }  \max(P_t)\ge\left\lceil \frac{2n}{3}\right\rceil+1,$$
 otherwise we would have
$|P_t|\le\max(P_t)-\min(P_t)+1 \le \lceil \frac{2n}{3}\rceil -(\lceil\frac{n}{3}\rceil+1)+1\le\lceil \frac{n}{3}\rceil$, which is again a contradiction.

\noindent{\bf Case A:}
$\lceil\frac{n}{3}\rceil<\max(P_t)\le \lceil \frac{2n}{3}\rceil$.\\
 Then we have 
$P_t\cap [\lceil\frac{2n}{3}\rceil+1,n]=\emptyset$. As $P_t$ contains both even and odd integers and $\mathcal{P}\in\mathfrak{P}(T_{\pi})$, 
we have that
$[\lceil\frac{2n}{3}\rceil +1,n]\subseteq S(\mathcal{P})$, 
which in turn gives $k\ge |S(\mathcal{P})|+2\ge\lfloor\frac{n}{3}\rfloor +2>\lceil\frac{n}{3}\rceil$, a contradiction.
Therefore we must have $\max(P_t)\ge\lceil\frac{2n}{3}\rceil+1$.

\noindent{\bf Case B:}  $\lceil\frac{n}{3}\rceil+1\le\min(P_t)<\lceil\frac{2n}{3}\rceil$. \\
We have $[\lceil\frac{n}{3}\rceil+1,\lceil\frac{2n}{3}\rceil]\cap P_t\ne\emptyset$.
If neither $P_t\cap[\lceil\frac{n}{3}\rceil+1,\lceil\frac{2n}{3}\rceil]_o$ nor 
$P_t\cap[\lceil\frac{n}{3}\rceil+1,\lceil\frac{2n}{3}\rceil]_e$ is empty, then
from $\mathcal{P}\in\mathfrak{P}(T_{\pi})$ we get $[1,\lceil\frac{n}{3}\rceil]\subseteq S(\mathcal{P})$ and 
$k>\lceil\frac{n}{3}\rceil $,  a contradiction.
Assume that $P_t\cap[\lceil\frac{n}{3}\rceil+1,\lceil\frac{2n}{3}\rceil]_o\ne\emptyset$ and 
$P_t\cap[\lceil\frac{n}{3}\rceil+1,\lceil\frac{2n}{3}\rceil]_e=\emptyset$
(the other possibility follows similarly). As we have $[\min(P_t),\max(P_t)]\subseteq P_t\cup S(\mathcal{P})$, we have $[\min(P_t)+1,\lceil\frac{2n}{3}\rceil]_e\subseteq S(\mathcal{P})$.
From $\mathcal{P}\in \mathfrak{P}(T_{\pi})$ we have
$[1,\lceil\frac{n}{3}\rceil]_o\subseteq S(\mathcal{P})$ and 
$[\lceil\frac{n}{3}\rceil+1,\min(P_t)-1]_o\subseteq S(\mathcal{P})$.
These together with Claim~\ref{cl:cutting} give
\begin{eqnarray*}
|S(\mathcal{P})|&\ge& \left\vert\left[1,\left\lceil\frac{n}{3}\right\rceil \right]_o\right\vert
+\left\vert\left[\min(P_t)+1,\left\lceil\frac{2n}{3}\right\rceil\right]_e\right\vert
 +\left\vert\left[\left\lceil\frac{n}{3}\right\rceil+1, \min(P_t)-1\right]_o\right\vert\\ 
&\ge& \left\lceil\frac{1}{2}\left\lceil\frac{n}{3}\right\rceil\right\rceil
+\left\lceil\frac{1}{2}\left(\left\lceil\frac{2n}{3}\right\rceil
-\left\lceil\frac{n}{3}\right\rceil-2\right)\right\rceil
\ge \left\lceil\frac{1}{2}\left\lceil\frac{n}{3}\right\rceil\right\rceil
+\left\lceil\frac{1}{2}\left\lfloor\frac{n}{3}\right\rfloor\right\rceil-1\\
&\ge&\left\lceil\frac{n}{3}\right\rceil-1,
\end{eqnarray*} 
and $k\ge|S(\mathcal{P})|+2>\lceil\frac{n}{3}\rceil$, which is a contradiction. 
Thus we must have $\min(P_t)\le\lceil\frac{n}{3}\rceil$.

 \noindent{\bf Case C:} $\min(P_t)\le \lceil\frac{n}{3}\rceil$,  $\max(P_t)\ge\lceil\frac{2n}{3}\rceil+1$.\\
Recall that $P_t$ contains both even and odd integers. 
 Since for all $i\in[s]\setminus\{t\}$ we have
 $P_i\cap[\min(P_t),\max(P_t)]=\emptyset$, we have 
 $[\lceil\frac{n}{3}\rceil+1,\lceil\frac{2n}{3}\rceil]\setminus P_t\subseteq S(\mathcal{P})$. 
 
 \noindent{\bf Subcase a:} $ P_t\cap[\lceil\frac{n}{3}\rceil+1,\lceil\frac{2n}{3}\rceil]=\emptyset$.\\
 We have 
 $|S(\mathcal{P})|\ge|[\lceil\frac{n}{3}\rceil+1,\rceil\frac{2n}{3}\rceil]|=\lfloor\frac{n}{3}\rfloor\ge \lceil\frac{n}{3}\rceil-1$, 
 and $k\ge |S(\mathcal{P})|+2>\lceil\frac{n}{3}\rceil$, a contradiction. Thus we have
 $ P_t\cap[\lceil\frac{n}{3}\rceil+1,\lceil\frac{2n}{3}\rceil]\ne \emptyset$.
 
 \noindent{\bf Subcase b:}  $ P_t\cap[\lceil\frac{n}{3}\rceil+1,\lceil\frac{2n}{3}\rceil]$ contains both even and odd integers.\\
$\mathcal{P}\in\mathfrak{P}(T_{\rho})$ yields that for all $i\in[s]\setminus\{t\}$ we have
  $P_i\cap ([1,\lceil\frac{n}{3}\rceil]\cup[\lceil\frac{2n}{3}\rceil+1,n])=\emptyset$, which together with the earlier $P_i\cap[\lceil\frac{n}{3}\rceil+1,\lceil\frac{2n}{3}\rceil]=\emptyset$
  gives $s=1$, which is a contradiction. Therefore $ P_t\cap[\lceil\frac{n}{3}\rceil+1,\lceil\frac{2n}{3}\rceil]$ either
  contains only even integers, or only  odd integers.
  
  \noindent{\bf Subcase c:}  $P_t\cap[\lceil\frac{n}{3}\rceil+1,\lceil\frac{2n}{3}\rceil]_e=\emptyset$ and
$P_t\cap[\lceil\frac{n}{3}\rceil+1,\lceil\frac{2n}{3}\rceil]_o\ne\emptyset$.\\
 (Note that the other remaining case follows similarly, so we omit it.)\\
We have that for all $i\in[s]\setminus\{t\}$, $P_i\cap[1,\lceil\frac{n}{3}\rceil]_o=\emptyset$, and consequently
$[1,\lceil\frac{n}{3}\rceil]_o\setminus P_t\subseteq S(\mathcal{P})$ and also
$P_i\subseteq [1,\min(P_t)-1]_e$ or $P_i\subseteq[\max(P_t)+1,n]$.
If $[1,\lceil\frac{n}{3}\rceil]_e\cap P_t\ne\emptyset$, then from $\mathcal{P}\in\mathfrak{P}(T_{\rho})$ we get
for all $i\in[s]\setminus\{t\}$ that $P_i\cap [1,\lceil\frac{n}{3}\rceil]_e=\emptyset$ and $P_i\cap [\max(P_t)+1,n]=\emptyset$, in other words $s=1$,
a contradiction.
Therefore we have $[1,\lceil\frac{n}{3}\rceil]_e\cap P_t=\emptyset$.
 Since $P_t$ contains some even integers, $P_t\cap[\lceil\frac{2n}{3}\rceil+1,n]_e\ne\emptyset$.
In particular, we have that $[1,\min(P_t)-1]_o]\subseteq S(\mathcal{P})$ and $[\min(P_t)+1,\lceil\frac{2n}{3}\rceil]_e\subseteq S(\mathcal{P})$. 
Using Claim~\ref{cl:cutting} we get
\begin{eqnarray*}|\mathcal{S}(P)|\ge \left\vert\left[1,\min(P_t)-1\right]_o\right\vert
+\left\vert\left[\min(P_t)+1,\left\lceil\frac{2n}{3}\right\rceil\right]_e\right\vert
\ge\left\lceil\frac{1}{2}\left(\left\lceil\frac{2n}{3}\right\rceil-2\right)\right\rceil
\ge \left\lceil\frac{n}{3}\right\rceil-1,
\end{eqnarray*}
which gives $k\ge|S(\mathcal{P})|+2>\lceil\frac{n}{3}\rceil$, a contradicton.
 \end{proof}

\section{An upper bound for $c_n$}\label{sec:upper}

In this section, we prove that the following upper bound holds for $c_n$.

\begin{theorem} \label{fo}
For two uniformly and independently selected random permutations, create the corresponding caterpillars $F$ and $T$. Then we have 
\begin{equation}\label{elsofoformula}
\EE\left[\mathcal|\mathfrak{P}(T)\cap \mathfrak{P}(F)\cap \ \mathfrak{Q}_n|\right] = \Theta\left(\frac{2^n}{n^2}\right).  
\end{equation}
As a consequence, we have
\begin{equation}\label{masodikfoformula}
\EE\left[\mathcal|\mathfrak{P}(T)\cap \mathfrak{P}(F)|\right] = 2^n\left(1+\Theta\left(\frac{1}{n^2}\right)\right).  
\end{equation}
and
\begin{equation} \label{foformula}
c_n=2^{n}\left(1+O\left(\frac{1}{n^2}\right)\right).
\end{equation}
\end{theorem}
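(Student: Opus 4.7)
The approach is to compute $\EE[|\mathfrak{P}(T)\cap \mathfrak{P}(F)\cap \mathfrak{Q}_n|]$ directly, using linearity of expectation and the independence of $T$ and $F$:
\[
\EE[|\mathfrak{P}(T)\cap \mathfrak{P}(F)\cap \mathfrak{Q}_n|] = \sum_{\mathcal{P}\in \mathfrak{Q}_n} p(\mathcal{P})^2, \qquad p(\mathcal{P}) := \Pr[\mathcal{P}\in \mathfrak{P}(T_\pi)],
\]
for a uniformly random permutation $\pi$. The first step is to establish the closed form $p(\mathcal{P}) = s!\,\prod_{i=1}^{s}|P_i|!\,/\,(n-\ell)!$, where $P_1,\dots,P_s$ are the nonsingleton classes of $\mathcal{P}$ and $\ell=|S(\mathcal{P})|$. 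This follows from the structural description of $\mathfrak{P}(T_\pi)$ recalled in Section~\ref{sec:preliminaries}: $\mathcal{P}\in\mathfrak{P}(T_\pi)$ holds iff the elements of each nonsingleton class appear as consecutive entries in the sub-sequence of $(\pi(1),\dots,\pi(n))$ obtained by deleting singleton positions. The placement and ordering of singletons are unconstrained, while exactly $s!\prod|P_i|!$ of the $(n-\ell)!$ orderings of the nonsingletons satisfy this block condition.

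Next, I would organize the sum by shape. Writing $N = n-\ell$ and converting unordered partitions to ordered compositions,
\[
\sum_{\mathcal{P}\in \mathfrak{Q}_n} p(\mathcal{P})^2 = \sum_{N=4}^{n}\binom{n}{N}\,\frac{H(N)}{N!}, \qquad H(N) := \sum_{s\ge 2}s!\!\!\sum_{\substack{(n_1,\dots,n_s)\\ n_i\ge 2,\,\sum n_i=N}}\!\prod_i n_i!.
\]
The core claim is $H(N)/N! = \Theta(1/N^2)$. The lower bound is immediate from the two compositions $(2,N{-}2)$ and $(N{-}2,2)$, each contributing $2!(N-2)!$ to $G_2(N)$ and hence $8(N-2)!$ to $H(N)$. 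For the upper bound, a routine geometric-decay argument (comparing consecutive terms via the ratio $(a+1)/(N-a)$) shows that the leading term of $s!G_s(N)$ comes from the compositions with one part of size $N-2(s-1)$ and the remaining $s-1$ parts equal to $2$, giving $s!G_s(N)/N! = O(1/N^{2(s-1)})$; summing over $s\ge 2$ yields $O(1/N^2)$. Combining with the standard binomial estimate $\sum_{N=4}^{n}\binom{n}{N}/N^2 = \Theta(2^n/n^2)$ (upper bound: split at $N=n/2$, using $1/N^2\le 4/n^2$ for the large half; lower bound: restrict to $N\in[n/2,\,n/2+\sqrt{n}]$ and use $\binom{n}{N}=\Theta(2^n/\sqrt{n})$) then proves (\ref{elsofoformula}).

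As a cleaner alternative for the upper bound I would first discard partitions with $|S(\mathcal{P})|\ge \alpha n$ for some fixed $\alpha\in(1/2,1)$ close to $1$. Using $p\le 1$ and the bound (\ref{uppertail}),
\[
\sum_{\mathcal{P}:\,|S(\mathcal{P})|\ge \alpha n} p(\mathcal{P})^2 \le \EE\!\left[\left|\mathfrak{P}(T)\cap \bigcup_{\ell\ge \alpha n}\mathfrak{P}_n^\ell\right|\right] = O(2^{(1-\epsilon)n}) = o(2^n/n^2),
\]
so only the range $N > (1-\alpha)n$ needs delicate analysis, and there all class sizes are $\Theta(n)$, making the geometric-decay estimate uniformly robust.

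Finally, (\ref{masodikfoformula}) follows from (\ref{elsofoformula}) by adding the $2^n - n$ trivial partitions in $\mathfrak{P}_n\setminus \mathfrak{Q}_n$, each automatically coconvex on any pair of trees. Bound (\ref{foformula}) is then immediate from the probabilistic method: $c_n \le \EE[|\mathfrak{P}(T)\cap \mathfrak{P}(F)|] = 2^n(1+O(1/n^2))$, while $c_n \ge 2^n - n$ from Lemma~\ref{lm:monotone}\ref{lowerbound_on_s_n} and (\ref{treeversuscat}) gives $c_n - 2^n \ge -n = -o(2^n/n^2)$. The main technical obstacle is the uniform estimate $H(N)/N! = O(1/N^2)$; the bound (\ref{uppertail}) is the key lever for controlling the awkward high-singleton tail where the combinatorics is least transparent.
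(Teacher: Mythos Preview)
Your plan is essentially the paper's proof. Writing the expectation as $\sum_{\mathcal{P}}p(\mathcal{P})^2$ is equivalent (by symmetry) to the paper's device of fixing one caterpillar to be $T_{\id_n}$, and your $H(N)/N!$ is exactly the paper's $\sum_m C_m^{\ell}$ with $N=n-\ell$; the use of (\ref{uppertail}) to discard the high-singleton tail and the derivation of (\ref{masodikfoformula}), (\ref{foformula}) are identical.

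Two places deserve tightening. First, the sentence ``there all class sizes are $\Theta(n)$'' is wrong as written; what survives the cut is $N=n-\ell>(1-\alpha)n$, so $N=\Theta(n)$, but individual block sizes can still be as small as $2$. Second, the per-level bound $s!\,G_s(N)/N!=O(1/N^{2(s-1)})$ is true for each fixed $s$ (your inductive idea $G_s(N)=\sum_c c!\,G_{s-1}(N-c)$ works), but the implied constant grows with $s$, so ``summing over $s\ge 2$ yields $O(1/N^2)$'' is not automatic; for $s$ comparable to $N$ the estimate $1/N^{2(s-1)}$ is far too small to be correct with a uniform constant. The paper handles this by splitting into three regimes, $2\le s\le 4$, $5\le s\le N/3$, and $N/3<s\le N/2$, and bounding each separately (the middle range via a geometric series in $(24/N)^{s-2}$, the top range crudely). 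You will need some such split; the single geometric-decay heuristic does not carry the full range on its own.
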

\begin{proof}
Clearly, $\EE\left[\mathcal|\mathfrak{P}(T)\cap \mathfrak{P}(F)\cap \ \mathfrak{Q}_n|\right] =\EE\left[\mathcal|\mathfrak{P}(T_{\id_n})\cap \mathfrak{P}(T_{\pi})\cap \ \mathfrak{Q}_n|\right] $, where the second expectation is computed over uniform selection of the permutation $\pi$.

In order to prove (\ref{foformula}),
by Lemma~\ref{lm:monotone}~\ref{lowerbound_on_s_n}, we only have to prove that the right hand side is an upper bound to $c_n$. 
Using (\ref{elsofoformula}) we get
$$
\EE\left[\mathcal|\mathfrak{P}(T)\cap \mathfrak{P}(F)|\right] = 2^n-n+\Theta\left(\frac{2^n}{n^2}\right)=2^n\left(1+\Theta\left(\frac{1}{n^2}\right)\right),  
$$
which is (\ref{masodikfoformula}). 
In turn, this gives that there is a
caterpillar $F'$, such that $|\mathfrak{P}(T_{\id_n})\cap \mathfrak{P}(F')|\leq 2^n\left(1+O(\frac{1}{n^2})\right)$,
 which implies (\ref{foformula}).

Therefore we only need to  bound   $\EE\left[\mathcal|\mathfrak{P}(T_{\id_n})\cap \mathfrak{P}(T_{\pi})\cap \ \mathfrak{Q}_n|\right]$. 
Clearly,
$$
\EE=\EE\left[\left\vert\mathfrak{P}(T_{\id_n})\cap \mathfrak{P}(T_{\pi})\cap \mathfrak{Q}_n\right\vert\right]
=\EE\left[\left\vert \mathfrak{P}(T_{\id_n})\cap \mathfrak{P}(T_{\pi})\cap \mathfrak{Q}_n\cap\left(\bigcup_{\ell=0}^n \mathfrak{P}_n^{\ell} \right)\right\vert\right]$$
By (\ref{uppertail}) there is $ \frac{1}{2}<\alpha<1$ and $\epsilon>0$ such that
\begin{eqnarray*}
\EE\left[\left\vert \mathfrak{P}(T_{\id_n})\cap \mathfrak{P}(T_{\pi})\cap \mathfrak{Q}_n\cap\left(\bigcup_{\ell\le \alpha n} \mathfrak{P}_n^{\ell} \right)\right\vert\right]
\le \left\vert\mathfrak{P}(T_{\id_n})\cap\left(\bigcup_{\ell\le \alpha n} \mathfrak{P}_n^{\ell} \right)\right\vert=O\left(2^{(1-\epsilon) n}\right).
\end{eqnarray*}
Therefore by linearity of expectation and the fact that $2^{(1-\epsilon)n}=o\left(\frac{2^n}{n^2}\right)$,
\begin{eqnarray*}
\EE&=&\left(\sum_{\ell=0}^{\alpha n} \EE\left[\left\vert \mathfrak{P}(T_{\id_n})\cap \mathfrak{P}(T_{\pi})\cap \mathfrak{Q}_n)\cap \mathfrak{P}_n^{\ell}   \right\vert \right]\right)+o\left(\frac{2^n}{n^2}\right)\\
&=& \left(\sum_{\ell=0}^{\alpha n}\sum_{\mathcal{P}\in \mathfrak{P}(T_{\id_n})\cap\mathfrak{Q}_n\cap \mathfrak{P}_n^{\ell}}  
 \mathbb{P}\left[ \mathcal{P}\in \mathfrak{P}(T_\pi) \right]\right)+o\left(\frac{2^n}{n^2}\right) \\
 &=&\left(\sum_{\ell=0}^{\alpha n}  \frac{1}{n!}  \sum_{\mathcal{P}\in \mathfrak{P}(T_{\id_n})\cap  \mathfrak{Q}_n\cap  \mathfrak{P}_n^{\ell} } 
 \left\vert\left\{\pi:  \mathcal{P}\in \mathfrak{P}(T_\pi)\right\} \right\vert\right)+o\left(\frac{2^n}{n^2}\right).
 \end{eqnarray*}
  
  We claim that for every $\ell$ and $m\geq 2$ we have
  \begin{equation}\label{ellksum}
\frac{1}{n!}  \sum_{\mathcal{P}\in \mathfrak{P}(T_{\id_n})\cap 
   \mathfrak{P}_n^{\ell} \cap \mathfrak{P}_{n,m+\ell}}\  \left\vert\left\{\pi:  \mathcal{P}\in \mathfrak{P}(T_\pi)\right\}     \right\vert = 
\binom{n}{\ell}\frac{m!}{(n-\ell)!} \sum_{{a_1+a_2+\ldots +a_m=n-\ell} \atop {a_1\geq 2,...,a_m\geq 2}} \prod_{j=1}^m (a_j!).
 \end{equation}

To verify this claim, take $\mathcal{P}\in \mathfrak{P}(T_{\id_n})\cap 
   \mathfrak{P}_n^{\ell} \cap \mathfrak{P}_{n,m+\ell}$.
   Then $\mathcal{P}$ has $\ell$ singleton classes and $m$ classes of size at least $2$. Let $\{P_1,P_2,\ldots, P_k\}$ be a standard listing of $\mathcal{P}$ with respect to $\id_n$, i.e., $|P_i|\ge 2$ precisely when $i\in[s]$, and for $1\le i<j\le s$ we have $\min(P_i)<\min(P_j)$. 
 For $i\in[s]$,  let $a_i=|P_i|$, then we have $a_i\ge 2$ and $a_1+\ldots+a_m=n-\ell$.
 It follows  that for $1\le i<j\le s$ we also have $\max(P_i)<\min(P_j)$, and listing the elements of $[n]-S(\mathcal{P})$ in increasing order, 
  for each $i<j$ all elements of $P_i$ come before any element of $P_j$. To create a permutation $\pi$ for which $\mathcal{P}\in T_{\pi}$, we need to put down the elements of $S(\mathcal{P})$ into the $n$ possible positions in any way 
  (which can be done in $\binom{n}{\ell}\ell!=\frac{n!}{(n-\ell!)}$ ways) 
  and then on the remaining $n-\ell$ places put any permutation of the remaining $n-\ell$ elements 
  that keep the elements of each $P_i$ consecutive 
  (which can be done in $m!\prod_i (a_i!)$ ways). 
 
 Therefore for a fixed $\mathcal{P}\in \mathfrak{P}(T_{\id_n})\cap 
   \mathfrak{P}_n^{\ell} \cap \mathfrak{P}_{n,m+\ell}$ with given $a_1,\ldots,a_m$ there are 
   $
   \frac{n!}{(n-\ell!)}m!\prod_i (a_i!)$ permutations $\pi$ such that $\mathcal{P}\in\mathfrak{P}(T_{\pi})$. We also need
   to count the partitions of $\mathfrak{P}(T_{\id_n})\cap 
   \mathfrak{P}_n^{\ell} \cap \mathfrak{P}_{n,m+\ell}$   that give the same ordered $m$-tuple 
   $(a_1,\ldots,a_m)$. However, that should be clear, as we can choose in $\binom{n}{\ell}$
   ways the elements that are to become singleton classes, and the rest of the permutation is given by the fixed $(a_1,\ldots,a_m)$
 Formula (\ref{ellksum}) follows. This implies that
 \begin{equation}\label{ellsum}
\frac{1}{n!} \sum_{\mathcal{P}\in \mathfrak{P}(T_{\id_n})\cap 
 \mathfrak{Q}_n\cap  \mathfrak{P}_n^{\ell} }\left\vert\left\{\pi:  \mathcal{P}\in \mathfrak{P}(T_{\pi})\right\}     \right\vert =
 \binom{n}{\ell}\sum_{m=2}^{\lfloor\frac{n-\ell}{2}\rfloor} \frac{m!}{(n-\ell)!} \sum_{{a_1+a_2+\ldots +a_m=n-\ell} \atop {a_1\geq 2,...,a_m\geq 2}} \prod_{j=1}^m (a_j!).
 \end{equation}
 
   The proof of the theorem boils down to giving a good upper bound on the sum of the right hand side of (\ref{ellsum}) over the range
  $ \ell \leq \alpha n$. 
  More precisely, give good uniform upper bounds for the sum following the binomial coefficient $\binom{n}{\ell}$  in the range above.
  
  To do so, we set
  \begin{equation}\label{cmellclean}
 C_{m}^{\ell}= \frac{m!}{(n-\ell)!} \sum_{{a_1+a_2+\ldots +a_m=n-\ell} \atop {a_1\geq 2,...,a_m\geq 2}} \prod_{j=1}^m (a_j!).
    \end{equation}

 For each $m$, consider the tuples $(a_1,\ldots,a_m)$ in which  exactly $s$ of the $a_i$  equals to 2. 
 If $s=m$, then we have $n-\ell$ is even, $m=\frac{n-\ell}{2}$, and $\prod_{j=1}^m, (a_j!)=2^m=2^{\frac{n-\ell}{2}}$, and we have
 $C_{(n-\ell)/2}^{\ell}=\frac{2^{\frac{n-\ell}{2}}}{(n-\ell)_{(n-\ell)/2}}\le\left(\frac{4}{n-\ell}\right)^{\frac{n-\ell}{2}}$. 
   Assume $s<m$. 
We can select the $s$ indices for which $a_i=2$ in $\binom{m}{s}$ ways, moreover, if $a_j\ne 2$ we have $a_j\ge 3$.
 The number of tuples with exactly $s$ of the $a_i$ equals $2$ (and consequently the remaining $m-s$ terms are each at least $3$ and
 add up to
 $n-\ell-2s$) is
 $$\binom{m}{s}\binom{(n-\ell-2s)-3(m-s)+(m-s)-1}{(m-s)-1}=\binom{m}{s}\frac{(n-\ell-2m-1)!}{(m-s-1)!(n-\ell-3m+s)!}.$$ 
   As for any $4\le r\le q$ we have $r!q!<(r-1)!(q+1)!$ and $r+q=(r-1)+(q+1)$, if for each $(a_1,\ldots,a_m)$ where exactly $s$ of the $a_i$ equals to $2$ we have
   that 
   $$\prod_{i=1}^m(a_i!)\le 2^s (3!)^{m-s-1}(n-\ell-2s-3(m-s-1))!=2^s6^{m-s-1}(n-\ell-3m+s+3)!$$
  and we must have $n-\ell-3m+s+3\ge 3$, in other words $\max(0,3m-(n-\ell))\le s\le m-1$.

  Therefore,when $m\le \lfloor\frac{n-\ell-1}{2}\rfloor$ we have
  \begin{eqnarray}
  C_m^{\ell}&\le&\frac{m!}{(n-\ell)!}\sum_{s=\max(0,3m-(n-\ell))}^{m-1}
  \frac{2^s6^{m-s-1}\binom{m}{s}(n-\ell-2m-1)!}{(m-s-1)!(n-\ell-3m+s)!}(n-\ell-3m+s+3)!\nonumber\\
  &=&\sum_{s=\max(0,3m-(n-\ell))}^{m-1}
  \frac{2^s6^{m-s-1}\binom{m}{s}m!(n-\ell-2m-1)!(n-\ell-3m+s+3)!}{(m-s-1)!(n-\ell)!(n-\ell-3m+s)!}\nonumber\\
  &=&\sum_{s=\max(0,3m-(n-\ell))}^{m-1}
  \frac{2^s6^{m-s-1}\binom{m}{s}m_{s+1}(n-\ell-3m+s+3)_3}{(n-\ell)_{2m+1}}\label{cmell}
    \end{eqnarray}
  
 We want to bound the following quantity by breaking it up to $3$ sums (where we keep in mind that $\ell\le \alpha n$, and $n\rightarrow\infty$)
 \begin{eqnarray} 
\sum_{\ell=0}^{\alpha n}\sum_{m=2}^{\lfloor\frac{n-\ell}{2}\rfloor}C_m^{\ell}&=&
\sum_{\ell=0}^{\alpha n}\sum_{m=2}^4 C_m^{\ell} 
  +\sum_{\ell=0}^{\alpha n}\sum_{m=5}^{\lfloor\frac{n-\ell}{3}\rfloor} C_m^{\ell}
+ \sum_{\ell=0}^{\alpha n}\sum_{m=\lfloor\frac{n-\ell}{3}\rfloor+1}^{\lfloor\frac{n-\ell}{2}\rfloor} C_m^{\ell}. \label{tobound}
  \end{eqnarray}
   
 {\bf Case:} $ \frac{n-\ell}{3}<m\leq \lfloor \frac{n-\ell}{2}\rfloor$.\\
 We have  $(m)_{s+1}\leq \lfloor \frac{n-\ell}{2}\rfloor!$, $(n-\ell-3m+s+3)_3\leq n^3$,
  $\sum_s 2^s6^{m-s} \binom{m}{s} =8^m<8^\frac{n-\ell}{2}$.
 For $\lfloor \frac{n-\ell}{3}\rfloor<m\leq \lfloor \frac{n-\ell-1}{2}\rfloor$ formula (\ref{cmell}) gives
\begin{eqnarray*}
C_m^{\ell}&\le& \frac{n^3 \lfloor \frac{n-\ell}{2}\rfloor! 8^{\frac{n-\ell}{2}}}{(n-\ell)_{2\lfloor\frac{n-\ell}{3}\rfloor+1}}
\le\frac{n^3 8^{\frac{n-\ell}{2}}}{(\lceil\frac{n-\ell}{2}\rceil)_{\lfloor\frac{n-\ell}{6}\rfloor+1}}\cdot\frac{\lfloor\frac{n-\ell}{2}\rfloor! }{(n-\ell)_{\lfloor\frac{n-\ell}{2}\rfloor}}
<\frac{n^3 8^{\frac{n-\ell}{2}}}{(\lceil\frac{n-\ell}{2}\rceil)_{\lfloor\frac{n-\ell}{6}\rfloor+1}}\\
&<&\frac{n^3 8^{\frac{n-\ell}{2}}}{\left(\frac{n-\ell}{3}\right)^{\frac{n-\ell}{6}}}<n^3\left(\frac{3\cdot 8^3}{n-\ell}\right)^{\frac{n-\ell}{6}}
<n^3\left(\frac{3\cdot 8^3}{(1-\alpha)n}\right)^{\frac{n}{6}}
\end{eqnarray*}
and if $\frac{n-\ell}{2}$ is an integer, then 
$$C_{(n-\ell)/2}^{\ell}\le\left(\frac{4}{n-\ell}\right)^{\frac{n-\ell}{2}}<\frac{n^3 8^{\frac{n-\ell}{2}}}{\left(\frac{n-\ell}{3}\right)^{\left\lfloor\frac{n-\ell}{6}\right\rfloor}}
<n^3\left(\frac{3\cdot 8^3}{(1-\alpha)n}\right)^{\frac{n}{6}}
$$
and
$$
\sum_{m=\lfloor\frac{n-\ell}{3}\rfloor+1}^{\lfloor\frac{n-\ell}{2}\rfloor}C_m^{\ell}<n^4\left(\frac{3\cdot 8^3}{(1-\alpha)n}\right)^{\frac{n}{6}}.
$$
As the above quantity goes to $0$ superexponentially fast as $n\rightarrow\infty$, we get
\begin{equation}\label{partone}
\sum_{\ell=0}^{\alpha n}\binom{n}{\ell}\sum_{m=\lfloor\frac{n-\ell}{3}\rfloor+1}^{\lfloor\frac{n-\ell}{2}\rfloor}C_m^{\ell}
<2^nn^4\left(\frac{3\cdot 8^3}{(1-\alpha )n}\right)^{\frac{n}{6}}=o\left(\frac{2^n}{n^2}\right).
\end{equation}
  
  {\bf Case:} $5\leq m\leq \frac{n-\ell}{3}$.\\
For $\ell\le \alpha n$ we have $n-\ell\ge (1-\alpha )n>7$ for large enough $n$, therefore  for large enough $n$ for $0\le \ell\le \alpha n$ we have 
$\frac{n-\ell}{3}<\frac{n-\ell}{2}$, and consequently $s<m$.
As when $n\rightarrow \infty$ and $\ell\le \alpha n$ we have $(n-\ell)\rightarrow\infty$, for large enough $n$ there is a constant $K$ such that
 $$\frac{(n-\ell-3m+s+3)_3}{(n-\ell)_{2m+1}}
 =\frac{(n-\ell-3m+s+3)_3}{(n-\ell-2m+2)_3}\cdot\frac{1}{(n-\ell)_{2m-2}}<\frac{K}{(n-\ell)_{2m-2}}.$$
  As before, $\sum_s 2^s6^{m-s} \binom{m}{s} =8^m$, and $s\le m-1$, $m\le\frac{n-\ell}{3}$ together with (\ref{cmell}) gives
\begin{eqnarray*}
  C_m^{\ell}&\le& \frac{K\cdot 8^m\left(\lfloor\frac{n-\ell}{3}\rfloor\right)_m}{(n-\ell)_{2m-2}}
 =\frac{K\cdot 8^m}{(n-\ell)_{m-2}}\cdot\frac{\left(\lfloor\frac{n-\ell}{3}\rfloor\right)_m}{(n-\ell-m+2)_{m}}
  <\frac{K\cdot 8^m}{(n-\ell)_{m-2}}<\frac{K\cdot 8^m}{\left(\frac{n-\ell}{3}\right)^{m-2}}\\
 & =&64K\left( \frac{24}{n-\ell}\right)^{m-2}<64K\left(\frac{24}{(1-\alpha)n}\right)^{m-2}.
    \end{eqnarray*}
  Now we have for large enough $n$
  \begin{eqnarray*}
  \sum_{m=5}^{\lfloor\frac{n-\ell}{3}\rfloor} C_m^{\ell}
  &\leq&
  \sum_{5\leq m\leq \frac{n-\ell}{3}}    64K\left( \frac{24}{(1-\alpha)n}\right)^{m-2}
  <64 K \sum_{k=3}^{\infty}\left( \frac{24}{(1-\alpha)n}\right)^{k}\\
  &=&\frac{64\cdot 24^3K}{(1-\alpha)^3n^3}\frac{1}{1-\frac{24}{(1-\alpha)n}}
  <\frac{70\cdot 24^3K}{(1-\alpha)^3n^3}
  \end{eqnarray*}
  and consequently
  \begin{equation}\label{parttwo}
  \sum_{\ell=0}^{\alpha n}\binom{n}{\ell}\sum_{m=5}^{\lfloor\frac{n-\ell}{3}\rfloor}C_m^{\ell}<2^n\cdot\frac{70\cdot 24^3K}{(1-\alpha)^3n^3}
  =o\left(\frac{2^n}{n^2}\right).
  \end{equation}

 { \bf Case:} $2\leq m\leq 4$\\
 Again, $s\le m-1$, $m\le 4$ and as $n\rightarrow\infty$, $n-\ell\rightarrow\infty$  there is a constant $D$ such that
 \begin{eqnarray*}
 \frac{(m)_{s+1}(n-\ell-3m+s+3)_3}{(n-\ell)_{2m+1}}
 \le \frac{m!(n-\ell-2m+2)^3}{(n-\ell-2m)^{2m+1}}=\frac{D}{(n-\ell-2m)^{2m-2}}
 \end{eqnarray*}
When $2\le m\le 4$, by (\ref{cmell}) we get
$C_{m,\ell}=O\left(\frac{1}{n^{2m-2}}\right)= O\left(\frac{1}{n^2}\right)$, and consequently
$$\sum_{\ell=0}^{\alpha n}\sum_{m=2}^4 C_m^{\ell}=O\left(\frac{2^n}{n}\right).$$

Now (\ref{cmellclean}) gives that 
\begin{eqnarray*}
C_2^{\ell}= 2 \sum_{a_1=2}^{n-\ell-2} \frac{1}{\binom{n-\ell}{a_1}}\ge 2\frac{1}{\binom{n-\ell}{2}}>\frac{8}{(n-\ell)^2}\ge\frac{8}{n^2}.
\end{eqnarray*}
Now we get by any large deviation theorem and $\alpha>\frac{1}{2}$ that for large enough $n$
$$
  \sum_{\ell=0}^{\alpha n}\binom{n}{\ell}\sum_{m=2}^{4}C_m^{\ell}
  \geq\frac{8}{n^2}\sum_{\ell=0}^{\alpha n}\binom{n}{\ell}
  \ge\frac{8}{n^2}\left(1+o(1)\right)2^n=\Omega\left(\frac{2^n}{n^2}\right),
  $$
  which gives
  \begin{equation}\label{partthree}
  \sum_{\ell=0}^{\alpha n}\binom{n}{\ell}\sum_{m=2}^{4}C_m^{\ell}=\Theta\left(\frac{2^n}{n^2}\right) . 
  \end{equation}

Now (\ref{elsofoformula}) follows from (\ref{ellsum}), (\ref{cmellclean}), (\ref{tobound}), (\ref{partone}), (\ref{parttwo}) and (\ref{partthree}).
\end{proof}
 
 \section{More than two trees}\label{sec:more}

In this section, we give some bounds for the number of coconvex characters 
on a collection of $n$-leaved trees using results from the theory
of maximum agreement subtrees. Analogues of the quantities $s_{n,k}, c_{n,k},s_{n}, c_{n}$
can clearly be defined for minimizing over $t\geq 3$ semilabeled binary trees or caterpillars, and
as in the introduction
these analogous quantities will be denoted by $s_{n,k}^{(t)}, c_{n,k}^{(t)},s_{n}^{(t)}, c_{n}^{(t)}$.

\subsection{Agreement subtrees} 

We first state some of the results from the theory of agreement forests.
An {\it  induced binary subtree }    $T|_Y$   of a semilabeled tree $T$ induced by a subset of leaves $Y$ 
 is defined as follows:
take the subtree induced by $Y$ in $T$, and
substitute paths in which all internal vertices have degree 2
 by edges. $T|_Y$ is a semilabeled binary tree, but its labels are from $Y$, not from $[n]$.

An important algorithmic problem, known as the {\em Maximum Agreement Subtree
Problem}, is the following: given two semilabeled binary trees, $T,F$, find a
common induced binary subtree of the largest possible size. In other words, find a maximum size set $Y\subseteq[n]$ such that $T|_y\simeq F|_Y$.
Somewhat surprisingly, this
problem can be solved in polynomial time \cite{stewar} (see also \cite{GKKMcM} and \cite{KKMcM95}).
The maximum agreement subtree problem is closely related to the topic of this paper, and in part motivated it.
Let $\mast(n)$ denote  the smallest order (number of leaves, or vertices)
 of the maximum
agreement subtree of two  semilabled trees with the same $n$ leaves.
In 1992, Kubicka, Kubicki, and McMorris \cite{KKMcM} showed that
$d_1 (\log \log n)^{1/2} < \mast(n) < d_2 \log n$ with some
explicit constants. After a long series of improvements, Markin \cite{markin} settled that $\mast(n) =\Theta(\log n)$.

Similarly, one can define the size of the  maximum agreement subtree of $t$ trees, $F_1,F_2,...,F_t$, denoted by
$\mast(F_1,F_2,...,F_t)$, and extend the extremal problem as 
\begin{equation} \label{agreementmany}
\mast_t(n)=\min_n \mast(F_1,F_2,...,F_t),
\end{equation}
where $F_1,F_2,...,F_t$ are binary semilabeled trees on the same $n$ leaves. 
One defines $\mastcat_t(n)$ as in (\ref{agreementmany}), but the minimum is taken over $t$-tuples of
caterpillar trees.

For caterpillar trees, there is always a substantially bigger agreement subtree. The reason is that a permutation always contains 
a $\lfloor\sqrt{n-1}\rfloor $ length increasing or decreasing subsequence, due to the Erd\H os--Szekeres theorem on sequences.
Assume without loss of generality that one of our caterpillar trees is $T_{\id_n}$, described by the identity permutation, and the other by
an arbitrary permutation $\pi$. Leaves labeled by the    $\lfloor\sqrt{n-1}\rfloor $ length increasing or decreasing subsequence
give an agreement subtree.

We need asymptotic lower bounds for $\mast_t(n)$ and $\mastcat_t(n)$. 
Here $t$ is fixed and $n\rightarrow \infty$.
The following bounds can be shown by induction from the cited result of Markin \cite{markin}
and the observation above:
\begin{eqnarray}
c \underbrace{\log\log \cdots \log }_{t-1 {\rm\ times}} n &\leq & \mast_t(n)   \label{alt} \hbox{\ \ for some\ } c>0\hbox{\ and \ }\\ 
n^{2^{1-t}}-t+1&\leq  &\mastcat_t(n).\label{forcat}
\end{eqnarray}

\subsection{Application of agreement subtrees}
 We show that with fixed number $t$ of caterpillars or trees, for sufficiently large $n$ and $k$, the trees still have a common partition
 in $\mathfrak{Q}_n$. Formally,
 \begin{theorem} 
 For every fixed  $t\ge 2$, for every sufficently large $n$, for all $k$ with $n-g_t(n)\leq  k\leq n-4$,
 $$s^{(t)}_{n,k}>\binom{n}{k-1};$$
 and for every $t$, for every sufficently large $n$, for all $k$ with $n-f_t(n)\leq k\leq n-4$,
 $$c^{(t)}_{n,k}>\binom{n}{k-1}.$$
 The choice $f_t(n)= n^{2^{1-t}}-t+1$
  and $g_t(n)=c' \underbrace{\log\log \cdots \log }_{t-1 {\rm\ times}} n$ with a certain $c'>0$  suffices.
  \end{theorem}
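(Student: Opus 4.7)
My plan is to prove the strict inequalities by exhibiting, for every $k$ in the stated range, a single non-trivial coconvex partition with exactly $k$ classes. Since the $\binom{n}{k-1}$ partitions obtained by choosing $k-1$ singletons and grouping the remaining $n-k+1$ leaves into one block exhaust the trivial partitions with $k$ classes and are automatically coconvex on any collection of trees (as observed in the proof of Lemma~\ref{lm:monotone}), producing even one additional non-trivial coconvex partition is enough to force $s^{(t)}_{n,k} > \binom{n}{k-1}$ (and likewise for $c^{(t)}_{n,k}$).

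The key tool will be a large common agreement subtree. By (\ref{alt}) for general semilabeled binary trees and (\ref{forcat}) for caterpillars, once $n$ is large enough there is a leaf subset $Y \subseteq [n]$ of size $m := |Y| \ge g_t(n)$ (resp.\ $m \ge f_t(n)$) on which all $t$ trees restrict to the same semilabeled binary tree $T_0$. The engine of the argument will be a \emph{lifting lemma}: if $\mathcal{P}'$ is a partition of $Y$ convex on $T_0$, then its extension $\mathcal{P}$ to $[n]$ obtained by declaring every element of $[n]\setminus Y$ a singleton class is convex on each $F_i$. This is because the $F_i$-subtree spanned by any class $C \subseteq Y$ is contained in the minimal subtree of $F_i$ spanning $Y$, and the suppression of degree-$2$ vertices producing $T_0$ identifies the edges of $T_0$ with internally vertex-disjoint paths in that minimal subtree; disjointness of class subtrees in $T_0$ therefore lifts to disjointness in $F_i$, and the singleton classes added for $[n]\setminus Y$ are isolated leaves outside this minimal subtree and hence trivially disjoint from everything.

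With the lifting lemma in place, I would construct the non-trivial partition as follows. Pick a cherry $\{x,y\}$ in $T_0$, which exists because $T_0$ is binary with $m \ge 4$ leaves; the internal edge directly above this cherry induces the split $Y = \{x,y\} \sqcup (Y\setminus\{x,y\})$ with sides of size $2$ and $m-2$. Choose any $B \subseteq Y \setminus \{x,y\}$ with $|B| = n-k$; this requires $m-2 \ge n-k$, which is guaranteed by the stated range for $k$ after absorbing a small additive constant into the constants implicit in $f_t$ and $g_t$ (consistent with the claim that the stated choices suffice once $n$ is large). Setting
\begin{equation*}
\mathcal{P}' := \bigl\{\,\{x,y\},\, B\,\bigr\} \cup \bigl\{\,\{z\} : z \in Y \setminus (\{x,y\} \cup B)\,\bigr\},
\end{equation*}
the two non-singleton classes $\{x,y\}$ and $B$ sit on opposite sides of the chosen internal edge of $T_0$, so $\mathcal{P}'$ is convex on $T_0$. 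Its lifted extension $\mathcal{P}$ is then coconvex on $F_1,\dots,F_t$, has exactly $2 + (m-2-(n-k)) + (n-m) = k$ classes, and, using $k \le n-4$ to ensure $|B| = n-k \ge 4$, contains two non-singleton classes, so $\mathcal{P} \in \mathfrak{Q}_n$. Hence $\mathcal{P}$ is different from every trivial partition with $k$ classes, and the required strict inequalities follow.

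The principal obstacle is the lifting lemma, which is where the agreement-subtree language of the Markin-type bounds has to be translated into the partition-convexity language used throughout the paper. One must check carefully that two class subtrees disjoint in $T_0$ remain disjoint after reintroducing the degree-$2$ vertices suppressed from the minimal $Y$-spanning subtree of $F_i$, and that a leaf in $[n]\setminus Y$ can never lie inside the subtree spanned by a class $C \subseteq Y$ (since as a leaf of $F_i$ it would have to belong to $C$, which it does not). Once this bookkeeping is in place, the rest of the argument is elementary and the admissible range of $k$ is dictated purely by requiring the small non-singleton class to fit inside $Y$, which is exactly what (\ref{alt}) and (\ref{forcat}) supply.
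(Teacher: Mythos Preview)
Your proposal is correct and follows essentially the same strategy as the paper: use the agreement-subtree bounds (\ref{alt}) and (\ref{forcat}) to find a common induced subtree $T_0$ on some leaf set $Y$, build a non-trivial convex partition on $T_0$, and lift it to all $t$ trees by declaring every leaf in $[n]\setminus Y$ a singleton. The only difference is the shape of the partition chosen on $T_0$: the paper takes $|Y|=2(n-k)$ and pairs the leaves of $T_0$ into $n-k$ doubletons, whereas you take the full agreement subtree and use one cherry plus one block of size $n-k$, which incidentally needs only $|Y|\ge n-k+2$ rather than $|Y|\ge 2(n-k)$; the lifting step you flag as the main obstacle is used (implicitly) by the paper as well, and your sketch of it is sound.
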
  
 \begin{proof}
 Any $g_t(n)\leq \max\{s: \mast_{t}(n)\geq 2s\}$ and $f_t(n)\leq \max\{s: \mastcat_{t}(n)\geq 2s\}$ functions suffice. Indeed,  
 with the bounds for $k$ we have $4\le n-k\le g_t(n)$ (resp. $4\le n-k\le f_t(n)$). 
 Therefore with any $t$ given trees  (resp. caterpillars), we can select a size $2(n-k)$ common subtree 
 (resp. subcaterpillar) of them. From the leaves of that we can select $n-k$
 two-element common partition classes for the given $t$ trees (caterpillars) of size $n$, while keeping the remaining $2k-n$ leaves all singletons
 (note that the bounds imply that for sufficiently large $n$ we have $2k-n>0$).
 This yields a partition of $n$ elements into $n-k$ (at least $2$) doubletons and $2k-n$ singletons, i.e., a partition into $k$ classes. In view of (\ref{alt})
 and (\ref{forcat}), the limitation for $k$ is sufficient.
 \end{proof}
 \begin{theorem} \label{morethantwo}
 For every   fixed $0<\delta<\frac{1}{3}$, and $n $ sufficiently large, there are at most $\left\lceil\frac{3}{\delta}\right\rceil-2$ caterpillar trees with $n$ leaves,
 such that for all $k<(1-\delta)n$, there is no $k$-partition $\mathcal{P}\in \mathfrak{Q}_n$ that is a common partition of these caterpillars.
 In other words, with the conditions above,
 $$c_{n,k}^{(3\lceil 1/\delta\rceil -2)}=\binom{n}{k-1}.$$
\end{theorem}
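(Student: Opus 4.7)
The plan is to exhibit an explicit collection of at most $\lceil 3/\delta\rceil - 2$ caterpillars whose only common partitions in $\mathfrak{P}_{n,k}$ with $k < (1-\delta)n$ are trivial. Set $q := \lceil 1/\delta\rceil$, so $1/q \leq \delta$ and $\lceil 3/\delta\rceil - 2 \leq 3q - 2 = 1 + 3(q-1)$. By monotonicity (enlarging a family of caterpillars only shrinks the intersection of induced-partition sets), exhibiting such a collection immediately yields $c_{n,k}^{(3\lceil 1/\delta\rceil - 2)} = \binom{n}{k-1}$, matching the trivial lower bound from Lemma~\ref{lm:monotone}~\ref{trivterm}. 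I take $T_{\id_n}$ as the first caterpillar; for each residue $r \in \{0, 1, \ldots, q-2\}$ I would adjoin up to three further caterpillars, in the spirit of the single non-identity caterpillar of Theorem~\ref{megegyezik}, designed to forbid any non-singleton class of a common convex partition from mixing the residue class $R_r := \{i \in [n] : i \equiv r \pmod q\}$ with any other residue class. Since every unordered pair of distinct residues modulo $q$ contains some element of $\{0, \ldots, q-2\}$, the conjunction of these restrictions forces each non-singleton class of any common non-trivial convex partition to lie within a single residue class mod $q$.

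Granted this residue-purity property, a short gap computation yields the contradiction. Suppose $\mathcal{P} \in \mathfrak{Q}_{n,s} \cap \mathfrak{P}_{n,k}$ with $s \geq 2$ is convex on all chosen caterpillars, $k < (1-\delta)n$, and its non-singleton classes $P_1, \ldots, P_s$ each lie within some residue class mod $q$. Write $\ell = k - s$. Since $\mathcal{P} \in \mathfrak{P}(T_{\id_n})$, Lemma~\ref{lm:gap}~\ref{idgap} gives $\gap(\mathcal{P}) \leq \ell$, while (\ref{paritygap}) applied to each $P_i$ together with Lemma~\ref{lm:gap}~\ref{standardgap} yields
\[
\gap(\mathcal{P}) = \sum_{i=1}^s \gap(P_i) \geq (q-1)\sum_{i=1}^s (|P_i| - 1) = (q-1)(n - \ell - s).
\]
Combining gives $(q-1)(n - s) \leq q\ell$, hence
\[
k = \ell + s \geq \frac{(q-1)n + s}{q} \geq \frac{(q-1)n + 2}{q} = \Bigl(1 - \tfrac{1}{q}\Bigr)n + \tfrac{2}{q} \geq (1-\delta)n + \tfrac{2}{q} > (1-\delta)n,
\]
contradicting $k < (1-\delta)n$.

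The main obstacle is the construction of the three extra caterpillars per residue and the verification that they force residue-purity. The natural template is the proof of Theorem~\ref{megegyezik}, which handles $q = 2$ with a single non-identity caterpillar by splitting into Cases A, B, C according to where a hypothetical mixed-residue class $P_t$ sits relative to the thirds $[1, \lceil n/3 \rceil]$, $[\lceil n/3\rceil + 1, \lceil 2n/3\rceil]$, $[\lceil 2n/3\rceil + 1, n]$. I would generalize by selecting, for each $r \in \{0, \ldots, q-2\}$, three permutations that interleave $R_r$ with $[n] \setminus R_r$ in three complementary patterns analogous to the patterned permutation of Theorem~\ref{megegyezik}, so that the A/B/C trichotomy forces any class mixing $R_r$ with some $R_{r'}$, $r'\ne r$, to absorb so many intermediate elements of $[n]\setminus R_r$ as singletons that $k \geq (1-\delta)n$. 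Verifying that exactly three permutations per residue suffice for the generalized case analysis is the delicate combinatorial step, and it is precisely what pins down the leading $3(q-1)$ contribution in the bound $\lceil 3/\delta\rceil - 2$.
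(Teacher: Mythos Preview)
Your two-step strategy---force every non-singleton class of a common partition to lie in a single residue class modulo $q=\lceil 1/\delta\rceil$, then run the gap argument---is exactly the paper's plan, and your gap computation is essentially identical to the one given there. The issue is the first step, which you correctly flag as the obstacle but leave unfinished.

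The paper's construction is not a per-residue generalization of Theorem~\ref{megegyezik}. Instead of three caterpillars for each $r\in\{0,\ldots,q-2\}$, it fixes one ``block-ordered'' permutation $\rho$ listing the residue classes $D_0,D_1,\ldots,D_{q-1}$ consecutively, and then perturbs $\rho$ in two ways: reversing the order inside a single block $D_i$ (giving $q$ permutations $\rho_i$), and swapping either $D_0$ or $D_{q-1}$ with an interior block $D_i$ (giving $2(q-2)$ permutations). Together with $\id_n$ and $\rho$ this is $3q-2$ caterpillars. The residue-purity claim is then a half-page argument: $\rho$ forces the block-indices hit by a non-singleton class $P$ to form an interval, a second non-singleton class $P'$ must sit entirely before or after that interval, the reversals $\rho_i$ rule out $P'$ touching the endpoints of the interval, and the swaps $\rho^{(0,i)},\rho^{(i,q-1)}$ rule out $P'$ sitting strictly before or strictly after. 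This is much shorter than a Theorem~\ref{megegyezik}-style case analysis.

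Your proposed per-residue approach may be workable, but it is genuinely different and you have not verified it. Note that the Theorem~\ref{megegyezik} permutation does \emph{not} enforce residue-purity; its proof allows a single class $P_t$ to mix parities and instead argues that the surrounding singleton count forces $k>\lceil n/3\rceil$. Lifting that argument to force purity for an asymmetric split $R_r$ versus $[n]\setminus R_r$ (sizes roughly $n/q$ and $(q-1)n/q$) with only three extra caterpillars per residue is not obviously a routine extension, and the paper's global block-reversal/block-swap construction sidesteps this entirely.
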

\begin{proof}
Fix  $m=\lceil 1/\delta \rceil$ and observe for large enough $n$ we have $3\leq m<n$. 
For $i:0\le i\leq m-1$ let the block $D_i$ be the set of integers in $[n]$ that are $i$ modulo $m$.
Let $\rho$ be an arbitrary permutation that lists the element of $[n]$ in the order $D_0,D_1\ldots,D_{m-1}$ (where the order within each block is arbitrary).
Using $\rho$, we define a few more permutations:
\begin{enumerate}[label=(\roman*)]
\item  $\rho_i$ for $i=0,1...,m-1$ is obtained from $\rho$ by reversing the order among the elements of $D_i$ and not changing anything else ($m$ new permutations). 
\item $\rho^{(0,i)}$ for $i=1...,m-2$ is obtained from $\rho$ by interchanging the blocks $D_0$ and $D_i$  and not changing anything else ($m-2$ new permutations). 
\item $\rho^{(i,m-1)}$ for $i=1...,m-2$ is obtained from $\rho$ by interchanging the blocks $D_i$ and $D_{m-1}$  and not changing anything else ($m-2$ new permutations). 
\end{enumerate}
Create caterpillar trees $T_{\pi}$ for  permutations $\pi$ of the form $\id_n$, $\rho$, $\rho_i$, $\rho^{(0,i)}$ and $\rho^{(i,m-1)}$, a total of $3m-2$ trees.
\begin{claim} \label{egybe}
If $\mathcal{P}\in \mathfrak{Q}_n$ is a common partition of the $3m-2$ trees above, then for every $P\in \mathcal{P}$
there is an $i$ such that $P\subseteq D_i$.
\end{claim}
\begin{proof} The claim is trivially true for singletons, so we need to consider only partition classes of size at least $2$. Let $P\in\mathcal{P}$ such that
$|P|\ge 2$, and let $P'\in\mathcal{P}\setminus\{P\}$ such that $|P'|\ge 2$. Since $\mathcal{P}\in\mathfrak{Q}_n$, $P'$ exists.
Set $H=\{j:P\cap D_j\ne\emptyset\}$ and $H'=\{j:P'\cap D_j\ne\emptyset\}$. 
Then $H,H'\ne\emptyset$. 
Assume to the contrary that $|H|\ge 2$, i.e., $\min(H)<\max(H)$; in particular $1\le \max(H)$ and $\min(H)\le m-2$.

Choose an $i$ such that  $P'\cap D_i\ne\emptyset$ (i.e. $i\in H'$).
Since $\mathcal{P}\in\mathfrak{P}(T_{\rho})$, we must have $i\le \min(H)$ or $i\ge \max(H)$.
If $i\in\{\min(H),\max(H)\}$, then both $P\cap D_i$ and $P'\cap D_i$ are non-empty, therefore
$\mathcal{P}\notin\mathfrak{P}(T_{\rho_i})$ (this uses the fact that $|H|\ge 2$), which is a contradiction.
Therefore $i<\min(H)$ or $i>\max(H)$; in particular we have $0<\min(H)$ or $\max(H)<m-1$.
We also must have $\max(H')<\min(H)$ or $\max(H)<\min(H')$: this is trivially true when $|H'|=1$, otherwise we can just exchange the role of $P$ and $P'$ in the previous argument.
If $0\le \max(H')<\min(H)\leq m-2$,
then  $\mathcal{P}\notin\mathfrak{P}(T_{\rho^{(0,\min(H))}})$, a contradiction.
If  $1\leq \max(H)< \min(H')\le m-1$, then $\mathcal{P}\notin\mathfrak{P}(T_{\rho^{(\max(H),m-1)}})$, another contradiction.
Therefore $P$ cannot intersect more than one block, and the claim follows.
\end{proof}

To conclude the proof of the theorem, consider a common partition $\mathcal{P}\in \mathfrak{Q}_n\cap\mathfrak{P}_{n,k}$ of the $3m-2 $ caterpillars, and let $s$ be the number
of size at least $2$ partition classes of $\mathcal{P}$. Let $\{P_1,\ldots,P_k\}$ be a standard listing of $\mathcal{P}$.
By Claim~\ref{egybe} there is a sequence $0\le q_1<q_2<\cdots<q_{s}\le m-1$ such that for every $j\in [s]$ we have $P_j\subseteq D_{q_j}$. By (\ref{paritygap}) we have for every $i\in[s]$, $\gap(P_i)\ge(m-1)(|P_i|-1)$, consequently
$\gap(\mathcal{P})\ge (m-1)(n-| S(\mathcal{P})|-s)$.  As $|S(\mathcal{P})|=k-s$, we get $\gap(\mathcal{P})\ge (m-1)(n-k)$.
On the other hand, Lemma~\ref{lm:gap}~\ref{idgap} gives $\gap(\mathcal{P})\leq |S(\mathcal{P})|= k-s$. Comparison of these two results 
gives $k-s\geq (m-1)(n-k)$, or equivalently 
$$k\geq \frac{m-1}{m}n+\frac{s}{m}>\left(1-\frac{1}{m}\right)n\ge(1-\delta)n,$$
contradicting the assumption $k<(1-\delta)n$.
\end{proof}

\section{A new family of tree metrics}\label{sec:metrics}

We now present the application of coconvex characters mentioned in the introduction.
Let $ n\ge 4$ and $2\le k\le n-2$. We define the {\em $k$-character distance $d_k(T,F)$} between two $n$-leaf 
trees $T,F$ to be 
$$d_k(T,F)=\left\vert \left(\mathfrak{P}(T)\Delta\mathfrak{P}(F)\right)\cap\mathfrak{P}_{n,k}\right\vert,$$
and the {\em character distance $d(T,F)$} to be
$$d(T,F)=\left\vert \mathfrak{P}(T)\Delta\mathfrak{P}(F)\right\vert.$$
We first show that $d_k$ and $d$ are both tree metrics.

\begin{lemma} Let $n\ge 4$. Then for all $2\le k\le n-2$, $d_k$ is a 
metric on the $n$-leaf trees. Moreover,
$$
d(T,F)=\sum_{k=2}^{n-2}d_k(T,F),
$$
and so $d$ is also a metric on the collection of $n$-leaf trees.
\end{lemma}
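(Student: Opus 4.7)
The plan is to verify the four metric axioms for each $d_k$ and then derive the statement for $d$. Symmetry and non-negativity are immediate from the symmetric-difference definition, and the triangle inequality follows from the set-theoretic inclusion $A\Delta C\subseteq (A\Delta B)\cup (B\Delta C)$ after intersecting with the fixed set $\mathfrak{P}_{n,k}$. The only substantive axiom is positive-definiteness: $d_k(T,F)=0\Rightarrow T=F$.

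For positive-definiteness, I would invoke the classical fact that two $n$-leaf binary semilabeled trees coincide iff they induce the same topology on every $4$-leaf subset. Assuming $T\ne F$, pick $Y=\{a,b,c,d\}$ with $T|_Y\ne F|_Y$, and relabel so that $T|_Y=ab|cd$ while $F|_Y$ has a different topology. Since $T|_Y=ab|cd$, there is an edge $e$ of $T$ whose removal produces a split $\{A_e,B_e\}$ of $T$ with $\{a,b\}\subseteq A_e$ and $\{c,d\}\subseteq B_e$. For the given $k$ with $2\le k\le n-2$, pick any $Z\subseteq[n]$ with $Y\subseteq Z$ and $|Z|=n-k+2$, and set $A'=A_e\cap Z$ and $B'=B_e\cap Z$. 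Define the partition
$$\mathcal{P}=\{A',B'\}\cup\{\{x\}:x\in[n]\setminus Z\},$$
which has exactly $k$ classes since $|A'|,|B'|\ge 2$ and $|[n]\setminus Z|=k-2$.

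I then claim $\mathcal{P}\in\mathfrak{P}(T)\setminus\mathfrak{P}(F)$. For $\mathcal{P}\in\mathfrak{P}(T)$: the subtrees of $T$ spanned by $A'$ and $B'$ lie on opposite sides of $e$ and are hence disjoint in $T$, while each singleton $\{x\}$ is trivially disjoint from both. For $\mathcal{P}\notin\mathfrak{P}(F)$: if $\mathcal{P}$ were convex on $F$, then $F|_{A'}$ and $F|_{B'}$ would be disjoint in $F$, so $\{A',B'\}$ would be a split of the induced subtree $F|_{A'\cup B'}$; restricting this split further to $Y\subseteq A'\cup B'$ would force $\{\{a,b\},\{c,d\}\}$ to be a split of $F|_Y$, contradicting $F|_Y\ne ab|cd$. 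This witness yields $d_k(T,F)>0$, as required.

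For the second part, trivial partitions (those in $\mathfrak{P}_n\setminus\mathfrak{Q}_n$) lie in $\mathfrak{P}(T)$ for every tree $T$, so they cancel in $\mathfrak{P}(T)\Delta\mathfrak{P}(F)$; the symmetric difference is therefore contained in $\mathfrak{Q}_n=\bigcup_{k=2}^{n-2}(\mathfrak{Q}_n\cap\mathfrak{P}_{n,k})$, where the union is disjoint. Summing cardinalities yields $d(T,F)=\sum_{k=2}^{n-2}d_k(T,F)$, and $d$ inherits the metric axioms as a sum of metrics. The main obstacle is arranging positive-definiteness uniformly across all $k$ in the range; the single recipe above, parameterised by the size of $Z$, is designed to handle the entire interval $2\le k\le n-2$ with one construction driven by a single distinguishing quartet.
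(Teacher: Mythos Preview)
Your proof is correct and follows essentially the same approach as the paper. Both proofs dispatch symmetry, non-negativity, and the triangle inequality trivially, and both prove positive-definiteness by choosing a distinguishing quartet $\{a,b,c,d\}$, taking an edge of $T$ that separates $\{a,b\}$ from $\{c,d\}$, and manufacturing a $k$-class partition with $\{a,b\}$ and $\{c,d\}$ in the two non-singleton blocks; the paper parameterises by the set $A$ of $k-2$ singletons while you parameterise by the complementary set $Z$ of $n-k+2$ non-singletons, and both conclude $\mathcal{P}\notin\mathfrak{P}(F)$ because no edge of $F$ separates $\{a,b\}$ from $\{c,d\}$. Your derivation of the sum formula via ``trivial partitions cancel in the symmetric difference'' is equivalent to the paper's observation that $\mathfrak{P}(T)\cap\mathfrak{P}_{n,k}$ is independent of $T$ for $k\in\{1,n-1,n\}$.
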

\begin{proof}
As $\mathfrak{P}(T)\cap\mathfrak{P}_{n,1}=[n]$ and  $\mathfrak{P}(T)\cap(\mathfrak{P}_{n,n-1}\cup\mathfrak{P}_{n,n})
\subseteq\mathfrak{Q}_{n,0}\cup\mathfrak{Q}_{n,1}$, it is clear that $d(T,F)=\sum_{k=2}^{n-2}d_k(T,F)$. So we only need to
prove the statement concerning $d_k$.

Clearly, $d_k(T,F)=d_k(F,T)\ge 0$, $d_k(T,T)=0$, and, as
$|A\Delta B|+|B\Delta C|\ge |A\Delta C|$, $d_k(\cdot,\cdot)$ satisfies the triangle inequality. Thus 
to show that $d_k$ is a metric it suffices to show that for all $n$-leaved trees $T\ne F$ we have $d_k(T,F)>0$.

So, suppose that $T,F$ are two trees with $n$ leaves such that $T\ne F$. Then, in particular, $n\ge 4$, and so there
are $4$ leaves $a,b,c,d$ such that the quartet subtrees spanned by these leaves in $T$ and $F$ are different. Assume
without loss of generality that the bipartition corresponding to the central edge 
of the quartet in $T$ is $ab|cd$ while in $F$ it is $ac|bd$. 
Let $k$ be an integer satisfying $2\le k\le n-2$. Then $0\le k-2\le n-4$, and consequently we can select a  $k-2$-element subset $A$
of $[n]\setminus\{a,b,c,d\}$. Let $e_0$ be one of the edges in $T$ that separates $a,b$ from $c,d$, and let
$e_1,\ldots,e_{k-2}$ be the leaf-edges of $T$ incident to the elements in $A$. 
Then removing $\{e_i:i\in[k-2]\cup\{0\}\}$ from $T$
generates a partition $\mathcal{P}\in\mathfrak{Q}_{n,2}$, where 
$\mathcal{P}=\{C_1,C_2\}\cup\{\{a\}:a\in A\}$ and $a,b\in C_1$, $c,d\in C_2$. 

Now, consider the tree $F$ and an edge $f$ of $F$. 
The removal of $f$ either does not separate $a,b,c,d$, or separates exactly one of $a,b,c,d$ from the other three, or
or it puts $a,c$ in one class and  $b,d$ in another class. 
Thus, $\mathcal{P}\notin\mathfrak{P}(F)$, showing that $d_k(T,F)>0$.
\end{proof}

We now show that $d_2$ and $d_{n-2}$ are well known tree metrics.
Recall that the Robinson-Foulds distance between two $n$-leaf trees is the number of bipartitions
contained one but not both of the trees \cite{Robinson1981}, and that the  
quartet distance 
is the number of 4-tuples of leaves that span different quartet trees in the two trees \cite{Steel1993}.
We will show that $d_2$ is the Robinson-Foulds distance 
and $d_{n-2}$ is the quartet distance on $n$-leaved trees.
To this end, we shall employ the following useful observation.

\begin{lemma}\label{lm:basedistance}  Let $T$ be an $n$-leaf tree, and $\mathcal{P}\in\mathfrak{P}(T)$. Then 
\begin{enumerate}[label=\upshape{(\roman*)}]
\item\label{case:notoneless}  $|S(\mathcal{P})|\ne n-1$.
\item\label{case:onlytriv}  If $|S(\mathcal{P})|\in\{n,n-2,n-3\}$, then $\mathcal{P}\in\mathfrak{Q}_{n,0}\cup\mathfrak{Q}_{n,1}$. 
\item\label{case:single}  
If $|S(\mathcal{P})|\ge n-3$, then $|S(\mathcal{P})|\ge 2|\mathcal{P}|-n$.
\end{enumerate}
\end{lemma}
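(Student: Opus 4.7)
The plan is to note that all three parts are purely combinatorial facts about partitions of $[n]$ into singleton and non-singleton classes, and that the hypothesis $\mathcal{P} \in \mathfrak{P}(T)$ plays no role in the proof. The workhorse identity is the decomposition
\begin{equation*}
n = |S(\mathcal{P})| + \sum_{P \in \mathcal{P},\, |P|\ge 2} |P|,
\end{equation*}
where every summand in the sum is at least $2$.

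For part \ref{case:notoneless}, I would argue by contradiction: if $|S(\mathcal{P})| = n-1$, the non-singleton classes of $\mathcal{P}$ must collectively cover exactly one element, which is impossible since each such class contains at least two elements. Hence the only option is that there are no non-singleton classes, forcing $|S(\mathcal{P})| = n$, a contradiction.

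For part \ref{case:onlytriv}, I would split into the three cases. If $|S(\mathcal{P})| = n$, every class is a singleton, so $\mathcal{P} \in \mathfrak{Q}_{n,0}$. If $|S(\mathcal{P})| = n-2$, the non-singleton classes cover exactly two elements, and the size-$\ge 2$ constraint forces a single class of size $2$, giving $\mathcal{P} \in \mathfrak{Q}_{n,1}$. If $|S(\mathcal{P})| = n-3$, the non-singleton classes cover exactly three elements; a split into sizes $2+1$ is forbidden (the $1$-block would be another singleton, contradicting $|S(\mathcal{P})| = n-3$), so there must be a single class of size $3$, again yielding $\mathcal{P} \in \mathfrak{Q}_{n,1}$.

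For part \ref{case:single}, I would use \ref{case:notoneless} to restrict to $|S(\mathcal{P})| \in \{n, n-2, n-3\}$, and then apply \ref{case:onlytriv} to determine $|\mathcal{P}|$ in each case: $|\mathcal{P}|$ equals $n$, $n-1$, $n-2$ respectively, so $2|\mathcal{P}| - n$ equals $n$, $n-2$, $n-4$, and each of these is at most $|S(\mathcal{P})|$ (with equality in the first two cases and strict inequality in the third). There is no real obstacle in this proof; the only point requiring slight care is the $n-3$ case of \ref{case:onlytriv}, where one must remember that splitting the three non-singleton elements into a pair plus a lone element would silently create a new singleton.
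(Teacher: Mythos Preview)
Your proof is correct and essentially matches the paper's argument for parts \ref{case:notoneless} and \ref{case:onlytriv}. For part \ref{case:single} the paper takes a slightly different (and more general) route: rather than case-splitting via \ref{case:notoneless} and \ref{case:onlytriv}, it directly observes that $n \ge |S(\mathcal{P})| + 2(|\mathcal{P}| - |S(\mathcal{P})|)$, which rearranges to $|S(\mathcal{P})| \ge 2|\mathcal{P}| - n$ and in fact holds for \emph{every} partition, not just those with $|S(\mathcal{P})| \ge n-3$. Your case-by-case verification is perfectly fine given the stated hypothesis, but the paper's one-line counting argument shows the extra hypothesis is superfluous.
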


\begin{proof}
Let $j=|S(\mathcal{P})|$, and $k=|\mathcal{P}|$. We have that $j\le k\le n$.

Since the partition classes of $\mathcal{P}$ are disjoint and cover all elements of $[n]$, $j\ne n-1$, which is \ref{case:notoneless}. If $j\ge n-3$, then $\mathcal{P}$ can have only one class with size bigger than $1$ (as two such classes would cover at least $4$ elements), which implies~\ref{case:onlytriv}.
Moreover,
 $$n=\left\vert\bigcup_{P\in\mathcal{P}}P\right\vert\ge j+2(k-j)=2k-j,$$
i.e. $j\ge 2k-j$, which is~\ref{case:single}.
\end{proof}

\begin{prop}
$d_2$ is the Robinson-Foulds distance on $n$-leaf trees and for $n\ge 4$, $d_{n-2}$ is the quartet distance.
\end{prop}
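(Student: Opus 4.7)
The plan is to handle the two equalities separately, in each case by characterizing the relevant slice $\mathfrak{P}(T) \cap \mathfrak{P}_{n,k}$ directly from the tree structure and then comparing symmetric differences with the standard combinatorial definitions.

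For $d_2$, I would first prove that $\mathfrak{P}(T) \cap \mathfrak{P}_{n,2}$ is exactly the set of $2n-3$ edge bipartitions of $T$. One direction is immediate: removing any single edge of $T$ induces a $2$-partition of the leaves. For the converse, if $\{A,B\} \in \mathfrak{P}(T) \cap \mathfrak{P}_{n,2}$ is obtained by removing an edge set $E$, then the minimal spanning subtrees $T[A]$ and $T[B]$ of $T$ lie in different components of $T - E$, so they must be vertex-disjoint. The unique path between them in $T$ contains at least one edge, and removing that single edge already induces $\{A,B\}$; since distinct edges of a binary tree give distinct bipartitions, this identifies the $2n-3$ elements. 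The $n$ trivial singleton-vs.-complement bipartitions belong to every tree and therefore cancel in the symmetric difference, so $d_2(T,F)$ reduces to the symmetric difference of the sets of non-trivial bipartitions, which is the Robinson-Foulds distance.

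For $d_{n-2}$, I would first note that a partition $\mathcal{P} \in \mathfrak{P}_{n,n-2}$ has class-size pattern either $3,1^{n-3}$ or $2,2,1^{n-4}$. In the first case $\mathcal{P}$ has only one class of size at least $2$, hence is trivial and convex on every tree; such partitions lie in $\mathfrak{P}(T)\cap\mathfrak{P}(F)$ and contribute nothing to $d_{n-2}$, consistent with Lemma~\ref{lm:basedistance}\ref{case:onlytriv}. In the second case, write $\mathcal{P} = \{\{a,b\},\{c,d\}\} \cup \{\{x\} : x \in [n]\setminus\{a,b,c,d\}\}$. The same disjoint-subtree argument as before shows that $\mathcal{P} \in \mathfrak{P}(T)$ iff the paths $T[\{a,b\}]$ and $T[\{c,d\}]$ are vertex-disjoint in $T$: one realizes $\mathcal{P}$ concretely by deleting every leaf edge incident to an element of $[n]\setminus\{a,b,c,d\}$ together with a single edge on the path joining these two disjoint subpaths. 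This vertex-disjointness condition is precisely the statement that the induced quartet of $T$ on $\{a,b,c,d\}$ is $ab|cd$. Since each 4-subset $Q$ gives rise to exactly three such partitions (one per pairing), and each tree realizes exactly the one matching its own quartet on $Q$, summing over 4-subsets converts $d_{n-2}(T,F)$ into a count of 4-subsets on which the two trees induce different quartets, yielding the quartet distance.

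The main obstacle, common to both parts, is the careful justification of the vertex-disjoint-subtree characterization. Because $\mathfrak{P}(T)$ is defined via arbitrary edge deletions, which may produce components with no leaves, one must argue that any such deletion inducing a partition with given non-singleton classes forces the spanning subtrees of those classes into distinct components, hence to be vertex-disjoint; and conversely that vertex-disjointness of the spanning subtrees is always enough to produce the partition by a concrete, explicit edge deletion. Once this equivalence is in hand, the remaining bookkeeping needed to recognise the Robinson-Foulds and quartet distances is routine.
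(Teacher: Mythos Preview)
Your proposal is correct and follows essentially the same line as the paper's proof: identify the non-trivial partitions in $\mathfrak{P}(T)\cap\mathfrak{P}_{n,2}$ with edge bipartitions, and the non-trivial partitions in $\mathfrak{P}(T)\cap\mathfrak{P}_{n,n-2}$ with quartet topologies via the $2,2,1^{n-4}$ shape. The only cosmetic difference is that the paper invokes Lemma~\ref{lm:basedistance} to pin down $|S(\mathcal{P})|=n-4$ whereas you enumerate the two possible class-size patterns of $\mathfrak{P}_{n,n-2}$ directly, and you spell out the vertex-disjoint-subtree characterisation of convexity that the paper leaves implicit.
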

\begin{proof}
Let $T$ be an $n$-leaf tree.
Since elements in $\mathfrak{P}(T)\cap \mathfrak{P}_{n,2}$ 
are bipartitions, $d_2$ is trivially the Robinson-Foulds distance.

Assume that $n\ge 4$.
By Lemma~\ref{lm:basedistance}, for any 
$\mathcal{P}\in\mathfrak{P}(T)\cap\mathfrak{P}_{n,n-2}\cap\mathfrak{Q}_n$ we have  $|S(\mathcal{P})|= n-4$.
This implies that
$\mathcal{P}$ has exactly $2$ partition classes of size exactly $2$ and $n-4$ partition classes
of size $1$. Thus, $\mathcal{P}$ can be obtained from $T$ by removing the $n-4$ leaf-edges leading to the leaves corresponding 
to elements of $S(\mathcal{P})$, and removing an extra edge separating the remaining $4$-leaves into
two classes of size $2$, i.e., these two classes form a bipartition induced
by the central edge of a quartet tree. Hence $d_{n-2}$ is the quartet distance.
\end{proof}

We conclude this section by explaining how the distance $d_k$ related to the quantity $s_{n,k}$.
As we cited before,  $|\mathfrak{P}(T)\cap \mathfrak{P}_{n,k}|= \binom{2n-k-1}{k-1}$ by \cite{SteelFibonacci}. Thus 
$$d_k(T,F)=2\binom{2n-k-1}{k-1}-2\left\vert\mathfrak{P}(T)\cap\mathfrak{P}(F)\cap\mathfrak{P}_{n,k}\right\vert$$
and
$$d(T,F)=2\sum_{k=2}^{n-2}\binom{2n-k-1}{k-1}-2\left\vert\mathfrak{P}(T)\cap\mathfrak{P}(F)\right\vert. $$
Therefore determining $s_{n,k}$ is equivalent to finding the largest possible 
value of $d_k$ over all possible pairs of $n$-leaved trees, that is, the {\em diameter} of $d_k$.
As mentioned in the introduction, even though the diameter of 
the Robinson-Foulds distance is well-known to be $2n-6$ \cite{Steel1993}, 
there is a 40 year-old conjecture by Bandelt and Dress that concerns 
the asymptotics of the diameter of the quartet distance.
Note also that various algorithms have been devised to compute the quartet distance (see e.g. \cite{nielsen2011sub}); it would be
interesting to see if similar algorithms can be found for computing $d_k$.

\section{Open problems}\label{sec:discuss}

Naturally defined combinatorial sequences are very frequently unimodal, consisting of an increasing run followed by a decreasing run, see \cite{Stanley1989}.
 Unimodality is often caused by log-concavity, i.e., the concavity of the sequence of the logarithms of the terms.
\begin{enumerate}
\item For two fixed $n$-leaf phylogenetic trees, $T$ and $F$, is the sequence of the number of common partitions,   $|\mathfrak{P}(T)\cap \mathfrak{P}(F)\cap \mathfrak{P}_{n,k}|$, unimodal 
for $1\leq k\leq n$? Is the sequence log-concave?
\item What is the answer of the previous question for  the sequence  of the number of \emph{non-trivial} common partitions,     $|\mathfrak{P}(T)\cap \mathfrak{P}(F)\cap \mathfrak{P}_{n,k}\cap \mathfrak{Q}_n|$?
\item What is the answer for the analogue of the previous two questions for $t>2$ fixed $n$-leaf phylogenetic trees?
\item For a fixed $n$ and $1\leq k \leq n$, are the sequences $s_{n,k}$ and  $c_{n,k}$ unimodal? Log-concave?
\item For a fixed $n$ and $1\leq k \leq n-1$, are the sequences $s_{n,k}-\binom{n}{k-1}$, $c_{n,k}-\binom{n}{k-1}$ unimodal? Log-concave?
\item Assuming that for a fixed $n$ the $c_{n,k}$ sequence is unimodal, which is first $k$ with $c_{n,k}>\binom{n}{k-1}$? (In Theorems~\ref{megegyezik} and \ref{symmet} we have shown $n/3<k< n/2+2$.)
\item Show separation between $s_{n,k}$ and $c_{n,k}$ or show that $s_{n,k}=c_{n,k}$. (the above cited conjecture of Bandelt and Dress \cite{bandeltdress} and the results of Alon et al. \cite{alon} exhibit the difficulty
of separating  $s_{n,n-2}$ from $c_{n,n-2}$.)
\item Show separation between  $s_n$ and $c_n$ or show that they are equal.
\item Find the expected number of common $k$-partitions for two uniformly selected random semilabelled binary trees of size $n$.
\item Is it possible to strengthen Theorem~\ref{morethantwo}? Is there a fixed $t$ such that $s_{n,k}^{(t)} =\binom{n}{k-1}$ for some large $k$? (say for all $k<n-f(n)$ for some $f(n)=o(n) $ function.)
\item How fast we can compute the $d_k$ distance of $n$-leaved phylogenetic trees? (Distance $d_{n-2}$ can be computed in $O(n \log n) $ time \cite{dudek,nielsen2011sub}.)
\end{enumerate}

\section*{Acknowledgements}
The authors SK and VM thank the Institut Mittag-Leffler for inviting them 
to the ``Emerging Mathematical Frontiers in Molecular Evolution" conference, where they
began to discuss coconvexity.
 Part of this work was done while EC, VM and LS were in residence at the Institute for Computational and Experimental Research in Mathematics (ICERM) in Providence (RI, USA) during the \emph{Theory, Methods, and Applications of Quantitative Phylogenomics} program (supported by grant DMS-1929284 of the National Science Foundation (NSF)).

\bibliographystyle{siam}
\bibliography{main}

\end{document}